\documentclass[12pt]{amsart}
\usepackage{amsmath}
\usepackage{amsfonts}
\usepackage{amssymb}
\usepackage{amscd}
\usepackage{mathtools}
\usepackage{amsxtra}
\usepackage{amsthm}
\usepackage{graphicx}
\usepackage[abbrev,alphabetic]{amsrefs}
\RequirePackage[dvipsnames,usenames]{color}
\usepackage{soul,xcolor}
\setstcolor{red}
\usepackage{stmaryrd}
\usepackage{booktabs}
\usepackage{multirow}
\newtagform{tiny}{\tiny(}{)}
\usepackage{threeparttable}

\usepackage{mathtools}
\usepackage{hyperref}
\usepackage[margin=1.25in]{geometry}
\usepackage{mathrsfs}

\usepackage{amsthm}
\usepackage{comment}
\usepackage[all,cmtip]{xy}
\usepackage{tikz-cd}
\usetikzlibrary{cd}

\tikzcdset{
  cells={font=\everymath\expandafter{\the\everymath\displaystyle}},
}

\usepackage[all]{xy}

\usepackage{cleveref}

\makeatletter
\def\@tocline#1#2#3#4#5#6#7{\relax
  \ifnum #1>\c@tocdepth 
  \else
    \par \addpenalty\@secpenalty\addvspace{#2}%
    \begingroup \hyphenpenalty\@M
    \@ifempty{#4}{%
      \@tempdima\csname r@tocindent\number#1\endcsname\relax
    }{%
      \@tempdima#4\relax
    }%
    \parindent\z@ \leftskip#3\relax \advance\leftskip\@tempdima\relax
    \rightskip\@pnumwidth plus4em \parfillskip-\@pnumwidth
    #5\leavevmode\hskip-\@tempdima
      \ifcase #1
       \or\or \hskip 1em \or \hskip 2em \else \hskip 3em \fi%
      #6\nobreak\relax
    \hfill\hbox to\@pnumwidth{\@tocpagenum{#7}}\par
    \nobreak
    \endgroup
  \fi}
\makeatother

\renewcommand{\P}{\mathbb{P}}
\newcommand{\Z}{\mathbb{Z}}

\newcommand{\cO}{\mathcal{O}}

\newcommand{\m}{\mathfrak{m}}

\newcommand{\fp}{\mathfrak{p}}

\newcommand{\Proj}{\mathrm{Proj}}
\newcommand{\wt}{\widetilde}

\def\var{\overline}

\DeclareMathOperator{\Spec}{Spec}

\newcommand{\bs}{\boldsymbol{s}}

\theoremstyle{plain}
\newtheorem{theorem}{Theorem}[section]

\newtheorem{proposition}[theorem]{Proposition}

\newtheorem{lemma}[theorem]{Lemma}

\newtheorem{corollary}[theorem]{Corollary}

\newtheorem{conjecture}[theorem]{Conjecture}

\newtheorem{claim}[theorem]{Claim}
\newtheorem*{claim*}{Claim}

\newtheorem{theoremA}{Theorem}

\theoremstyle{definition}

\newtheorem{notation}[theorem]{Notation}

\newtheorem*{setup*}{Setup}

\theoremstyle{remark}

\newtheorem*{ackn}{Acknowledgements}

\theoremstyle{plain}

\numberwithin{equation}{section}

\crefname{theorem}{Theorem}{Theorems}
\crefname{proposition}{Proposition}{Propositions}
\crefname{lemma}{Lemma}{Lemmas}
\crefname{corollary}{Corollary}{Corollaries}
\crefname{conjecture}{Conjecture}{Conjectures}
\crefname{claim}{Claim}{Claims}
\crefname{notation}{Notation}{Notations}
\crefname{remark}{Remark}{Remarks}
\crefname{example}{Example}{Examples}
\crefname{definition}{Definition}{Definitions}
\crefname{theoremA}{Theorem}{Theorems}

\title{Perfectoid splitting and global $+$-regularity for smooth hypersurfaces}

\author{Shou Yoshikawa}
\address{Institute of Science Tokyo, Tokyo 152-8551, Japan}
\email{yoshikawa.s.9fe9@m.isct.ac.jp}

\begin{document}

\begin{abstract}
In this paper, we prove that smooth Calabi--Yau hypersurfaces of degree $d$ over complete unramified discrete valuation rings with residue characteristic $p$ are perfectoid split if $p$ is larger than the relative dimension and $p\nmid d$. We also show that unramified lifts of smooth Fano hypersurfaces $X$ over fields of characteristic $p>0$ are globally $+$-regular if $p\ge \dim X$ and $p\nmid d$.
\end{abstract}

\maketitle

\section{Introduction}

Let $S$ be a scheme that is smooth and dominant over $\Spec \mathbb{Z}$, and let
$Y_S \to S$ be a smooth proper morphism whose fibers are Calabi--Yau varieties.
An important open problem, both in arithmetic geometry and in the algebraic geometry of positive characteristic, is the following weak ordinarity conjecture.

\begin{conjecture}\label{dense-ordinary-conj}
The fiber $Y_{\mathfrak p}$  is weakly ordinary for a Zariski-dense set of closed points
$\mathfrak p\in S$.
\end{conjecture}

Conjecture~\ref{dense-ordinary-conj} is known in low dimensions. In dimension $1$, namely
for elliptic curves, it follows from the classical theory of reduction of elliptic curves;
moreover, for elliptic curves over $\mathbb{Q}$, Elkies proved that there exist infinitely
many supersingular primes, so there are also infinitely many primes of non-ordinary
reduction \cite{Elkies1987}. In dimension $2$, Bogomolov and Zarhin proved that every K3
surface over a number field acquires ordinary reduction at all but a density-zero set of
primes after a finite extension of the base field \cite{Bogomolov-Zarhin}. Thus the
conjecture holds for elliptic curves and for K3 surfaces, but remains widely open in higher
dimensions, even for hypersurfaces.
For Calabi--Yau varieties, weak ordinarity is known to be equivalent to global $F$-splitting.
Hence Conjecture~\ref{dense-ordinary-conj} may also be viewed as a positive-characteristic
$F$-splitting statement.

The first main result of this paper is a mixed-characteristic analogue of
Conjecture~\ref{dense-ordinary-conj} for hypersurfaces.
The theory of perfectoid spaces, introduced by Scholze \cite{Scholze-perfectoid},
provides a framework for studying mixed-characteristic phenomena via comparison
with characteristic $p$. 
A fundamental feature of this theory is the tilting operation,
which relates objects in mixed characteristic to corresponding objects in characteristic
$p$. In this way, perfectoid geometry makes it possible to formulate and study
mixed-characteristic analogues of phenomena that are governed in characteristic $p$ by the
Frobenius morphism.

A mixed-characteristic analogue of $F$-splitting is \emph{perfectoid splitting},
introduced in \cite{IY26}. 
More precisely, a quasi-compact separated scheme $X$ is said to be
\emph{perfectoid split} if there exists an affine morphism $\pi \colon Y \to X$ such that
the $p$-adic formal completion $\widehat{Y}$ is a perfectoid formal scheme and the natural
map $\mathcal O_X \to \pi_*\mathcal O_Y$ is ind-split, that is, a filtered colimit of split
morphisms in $\mathrm{QCoh}(X)$. Thus perfectoid splitting is naturally viewed as a
mixed-characteristic analogue of $F$-splitting.

Moreover, perfectoid splitting is useful beyond the analogy with positive characteristic:
Bhatt proved that perfectoid split regular schemes satisfy a Kodaira-type vanishing theorem
\cite{Bhatt}*{Proposition~11.2.16}. As in the case of $F$-splitting in positive
characteristic (cf.~\cite{MR85}), this shows that perfectoid splitting is a useful notion
for proving vanishing theorems in mixed characteristic.

In this language, the following conjecture may be regarded as a mixed-characteristic
counterpart of Conjecture~\ref{dense-ordinary-conj}.

\begin{conjecture}\label{mixed-analog}
The localization
\[
Y_{S,\mathfrak p}:=Y_S\times_S \Spec \mathcal{O}_{S,\mathfrak p}
\]
is perfectoid split for a Zariski-dense set of closed points $\mathfrak p \in S$.
\end{conjecture}


Surprisingly, for hypersurfaces we obtain a much stronger conclusion: if $Y_S$ is a hypersurface in a projective space over $S$, then the localization
\[
Y_{S,\mathfrak p}:=Y_S\times_S \Spec \mathcal{O}_{S,\mathfrak p}
\]
is perfectoid split for almost all $\mathfrak p$ (cf.~Corollary~\ref{weak-ordinary}) by the following result.

\begin{theoremA}[Theorem~\ref{CY}]\label{main-CY}
Let $X$ be a smooth Calabi--Yau hypersurface of degree $d$ in a projective space over a complete discrete valuation ring $C$ with maximal ideal $(p)$.
If $p$ is larger than the relative dimension of $X$ and $p \nmid d$, then $X$ is perfectoid split.
\end{theoremA}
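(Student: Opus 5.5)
The plan is to reduce perfectoid splitting of $X\subset \P^{n+1}_C$, with $d=n+2$ and $n<p$, to a splitting statement for the homogeneous coordinate ring $R=C[x_0,\dots,x_{n+1}]/(f)$. Then I will check that statement using the explicit structure of the perfectoid cover of affine space.

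\textbf{Step 1: the cover.} I take
\[
S_\infty = C[x_0^{1/p^\infty},\dots,x_{n+1}^{1/p^\infty}],
\]
after adjoining $p^{1/p^\infty}$ (that is, working over $C_\infty$, the completion of $C[p^{1/p^\infty}]$). The candidate cover of $R$ is a suitable completion of $R\otimes_{S} S_\infty$, or rather its perfectoidization along $f$. Using the hypothesis $p\nmid d$, I would take graded pieces in $\frac1{p^\infty}\Z$-degrees and let $Y\to X$ be the induced affine morphism on $\Proj$. Ind-splitting of $\cO_X\to\pi_*\cO_Y$ should follow from splitting of $R\to R_\infty$ in degree-$0$ pieces, compatibly with the grading, after passing to the quasi-coherent sheaves on $\Proj$.

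\textbf{Step 2: reduction to a Fedder-type criterion.} For a complete intersection, splitting of $R\to R_\infty$ should be controlled by the analogue of Fedder's criterion in mixed characteristic. The splittings of $S\to S_\infty$ are given by projection onto monomials with exponents in $\Z$. A splitting descends to $R$ exactly when the induced map sends $f^{\,p^e-1}$-type multiples to a unit; equivalently, some power of $f$ has a nonzero coefficient on the monomial $(x_0\cdots x_{n+1})^{p^e-1}$ modulo $p$. Since $R\to R_\infty$ only needs to split after $p$-completion, I will use the $p$-adic deformation: it suffices to show that the reduction $X_k$ is $F$-split. In that case a lift of the Frobenius splitting exists because $C$ is unramified, and this lift yields a splitting in the perfectoid tower.

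\textbf{Step 3 (main obstacle).} The hypotheses $p>n$ and $p\nmid d$ do \emph{not} force $X_k$ to be $F$-split, since supersingular hypersurfaces exist. So Step 2 as stated fails, and I expect the real argument to use the extra room that perfectoid covers provide. Adjoining $p^{1/p^\infty}$ and $f^{1/p^\infty}$-type roots, the relevant obstruction becomes almost zero. The key computation I would try first is the following. In the perfectoid cover, the obstruction class lives in $H^n(X,\cO_X)\cong C$, via the top local cohomology of $R$ in degree $0$. One must show that this class, pushed into $H^n(Y,\cO_Y)$, remains nonzero (not killed) rather than splitting literally. Ind-splitting for a Calabi--Yau reduces, by duality with $\omega_X\cong\cO_X$, to injectivity of $H^n(X,\cO_X)\to H^n(Y,\cO_Y)$. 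To prove injectivity, I would compute the image of the Čech class $1/(x_0\cdots x_{n+1})$ times $f$ in the cover explicitly. Here the $p$-adic expansion of $f^{p^e-1}$ together with the binomial-coefficient estimate coming from $p>n$ shows the coefficient of $(x_0\cdots x_{n+1})^{p^e-1}$ is divisible by at most a bounded power of $p$, uniformly in $e$. Because $p\nmid d$, I can extract a $d$-th root needed to make the degree-$0$ part well-defined. A uniformly bounded $p$-divisibility means the class is not $p^{\infty}$-divisible, hence survives in the perfectoid cohomology, which is $p$-torsion-free up to almost zero. This gives injectivity, and then ind-splitting follows.
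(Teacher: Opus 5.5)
\textbf{Gap: the key estimate in Step~3 is false.} Step~3 is the only part of your plan meant to handle reductions $X_k$ that are not $F$-split, and its central estimate fails exactly there. Write $a_m$ for the coefficient of $(x_0\cdots x_{n+1})^{m-1}$ in $f^{m-1}$.

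Take $f=x_0^3+x_1^3+x_2^3$ over $W(\mathbb{F}_p)$ with $p\equiv 2\pmod 3$ (for instance $p=2$). This lies within the scope of the theorem ($n=1<p$, $p\nmid 3$), so it must be perfectoid split. Yet $a_{p^e}=0$ for every odd $e$, since $3\nmid p^e-1$. For even exponents, the digit-sum formula gives $v_p(a_{p^{2k}})=k$. So there is no uniform bound on $v_p(a_{p^e})$, and no ``binomial-coefficient estimate from $p>n$'' can exist: here $p>n$ holds for every $p$.

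This is not special to the example. The $a_m/m$ are the logarithm coefficients of Stienstra's formal group of the lift, whose reduction is the Artin--Mazur formal group of $X_k$. From Hazewinkel's recursion for the $p$-typification one gets $v_p(a_{p^e})\ge e-\lfloor e/h\rfloor$ when $X_k$ has height $h\ge 2$. So your bound holds only when $X_k$ is ordinary, i.e.\ already $F$-split, which is the case Step~2 covers anyway.

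Since $d=Q$, the monomial $(x_0\cdots x_{n+1})^{p^e-1}$ is the only monomial of degree $(p^e-1)d$ outside $(x_0^{p^e},\dots,x_{n+1}^{p^e})$. So your condition is just a $p$-adically thickened Fedder criterion on the plain powers $f^{p^e-1}$. It cannot see the extra room the perfectoid cover provides. Your uses of the hypotheses (the binomial estimate, ``extracting a $d$-th root'') also never enter in a real way.

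\textbf{What is missing.} The correct mixed-characteristic obstruction involves the $\delta$-structure $\Delta(f)=(f^p-\phi(f))/p$, not powers of $f$ alone. The paper first reduces, via the local--global correspondence of \cite{IY26} (Proposition~5.3 there), to perfectoid purity of $(A/f)^{\wedge p}$; that part matches your Step~1 in spirit. It then uses the criterion of \cite{Yoshikawa25-cri}: it suffices that the splitting-order sequence satisfies $s_n\le p-1$ for all $n$. This sequence is built from the elements
$f(s_1,\dots,s_n)=f(s_1,\dots,s_{n-2},s_{n-1}+1)^p\,\Delta(f)^{s_{n-1}}f^{p-s_n}$.

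The bound on $s_n$ uses the hypotheses as follows:
\begin{enumerate}
\item The condition $p\nmid d$ enters through the Euler relation, making $J(\bar f)$ $\bar\m$-primary.
\item The Jacobian ring is then Gorenstein with socle degree $(N+1)d-2Q$. This gives Lemma~\ref{lem:jacobian-degree-membership}: if $HJ(f)\subseteq \m^{[p^e]}$ and $\deg H<p^eQ+Q-(N+1)d$, then $H\in\m^{[p^e]}$.
\item Differentiating the elements $f(s_1,\dots,s_n)$, using $\partial\Delta(f)/\partial x_i\equiv f^{p-1}f_i-x_i^{p-1}f_i^p \pmod p$, strips off factors of $f$ and $\Delta(f)$ one at a time.
\item The degree bound needed at each step is exactly $p>N-1=\dim X$ when $Q=d$.
\end{enumerate}
This yields $s_n\le \dim X<p$ for all $n$ (Theorem~\ref{thm:sn-le-p-minus-1} and Corollary~\ref{cor:CY-all-sn-bound}), including for supersingular reductions such as the Fermat cubic above.
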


\noindent
Thus Theorem~\ref{main-CY} indicates that perfectoid splitting should be viewed as more
than a formal mixed-characteristic analogue of Frobenius splitting. At least for
hypersurfaces, it suggests the possibility that perfectoid splitting applies to a wider
class of varieties than Frobenius splitting.

To prove Theorem~\ref{main-CY}, we first reduce the global statement to a ring-theoretic one.
By the local-global correspondence between perfectoid splitting and perfectoid purity developed in \cite{IY26}, it is enough to study the perfectoid purity of the homogeneous coordinate ring of the given hypersurface. 
Here perfectoid purity is a mixed-characteristic analogue of $F$-purity,
introduced in \cite{p-pure}. For this ring-theoretic problem, the main tool is the
splitting-order sequence introduced in \cite{Yoshikawa25-cri}, together with the criterion
for perfectoid purity in terms of this sequence established there.
For a homogeneous hypersurface
\[
X=\Proj\  C[x_0,\dots,x_N]/(f),
\]
we establish a uniform bound on the splitting-order sequence of a lift of $f$ under a simple
numerical condition depending only on
\[
(p,N,\deg f,\sum_{i=0}^N \deg(x_i)).
\]
In the Calabi--Yau case this specializes to the inequality $p>N-1$.

The bound in Theorem~\ref{main-CY} is already sharp in dimension $2$. Indeed, in
Section~3 we study weighted hypersurface K3 surfaces and show that if $p>2$, then every
unramified lift as a hypersurface is perfectoid split. 
On the other hand, when $p=2$, there exist
supersingular hypersurface K3 surfaces admitting lifts that are not perfectoid split; see
Theorem~\ref{p-purity-K3} and \cite{TY26-K3}. 
Moreover, the characteristic $2$ case for K3 surfaces exhibits an additional phenomenon:
for every supersingular hypersurface K3 surface, one can choose a lift that is perfectoid split.

We next turn to Fano hypersurfaces. Here the relevant mixed-characteristic analogue of
global $F$-regularity is global $+$-regularity, developed in recent work on
mixed-characteristic algebraic geometry and the minimal model program
\cites{BMPSTWW23,TY23}.

Let $X$ be a smooth hypersurface of degree $d$ in a well-formed weighted projective
space $\mathbb{P}_k(q_0,\ldots,q_N)$ over a field $k$ of characteristic $p>0$, and set
$Q:=q_0+\cdots+q_N$. It is known that if
\[
p>\frac{(N+1)d-Q-2d}{Q-d},
\]
then $X$ is globally $F$-regular. The second main result of this paper is a
mixed-characteristic analogue of this boundedness statement.

\begin{theoremA}[Theorem~\ref{Fano-smooth}]\label{main-degree-cond}
In the above setting, if $p \nmid d$ and
\[
(Q-d)p^2+(p-N)d+Q>0,
\]
then every unramified lift of $X$ is globally $+$-regular.
In particular, if $p\ge \dim X$ and $p \nmid d$, then every unramified lift of $X$ is globally $+$-regular.
\end{theoremA}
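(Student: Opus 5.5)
Via the local–global correspondence of \cite{IY26}, I will reduce global $+$-regularity of an unramified lift $\mathcal X\subset \mathbb{P}_{C}(q_0,\dots,q_N)$ to a statement about its section ring. Here $C=W(k)$ or, more generally, a complete unramified DVR with uniformizer $p$. Write $\mathcal X=\Proj R$ with
\[
R=C[x_0,\dots,x_N]/(f),\qquad \deg x_i=q_i,
\]
where $f$ is a homogeneous lift of the defining equation of $X$. Since $X$ is Fano ($Q>d$), $\omega_{\mathcal X}=\cO(d-Q)$ is anti-ample. By the cone correspondence, global $+$-regularity of $\mathcal X$ then reduces to BCM-regularity (or $+$-regularity) of $R$ localized at the irrelevant ideal $\m$. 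Smoothness of $X$ makes $R$ regular away from $\m$, so only the vertex has to be controlled.

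For the vertex I will use the splitting-order sequence of $f$ from \cite{Yoshikawa25-cri}. Its criterion says that perfectoid purity, and the $+$-regular variant obtained by perturbing by a small power of an element cutting out a divisor, follows once the orders $\mathrm{sp}_n(f)$ satisfy a uniform linear bound. I will estimate these orders through the graded degrees of the monomials $\prod x_i^{a_i}$ that survive in the relevant Witt-vector/$\delta$-expansions of $f^{p-1}$-type elements, such as $f^{p-1}$ and $\delta(f)f^{p^2-p}\cdots$. The socle of the top local cohomology of $C[x]/(p)$ sits in degree $-Q$, so a monomial contributes to splitting only if its degree lands in the correct window.

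The numerical core is to show that the inequality $(Q-d)p^2+(p-N)d+Q>0$ is exactly what forces the relevant element to have a nonzero coefficient at $(x_0\cdots x_N)^{p^2-1}$ modulo $\m^{[p^2]}$, at the second Witt level. Two inputs go into this. First, $p\nmid d$ together with Euler's relation $d f=\sum q_i x_i\partial_i f$ lets me use smoothness, via the Jacobian ideal generating an $\m$-primary ideal, to produce such a coefficient. Second, comparing degrees, $(p^2-1)Q$ against $p^2 d+$ (the $N$-dependent correction coming from $\delta$), yields the displayed inequality. The same argument as in Theorem~\ref{CY} (the Calabi--Yau case $Q=d$, where the condition becomes $p>N-1$) should go through verbatim. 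In the Fano case the positive term $(Q-d)p^2$ gives extra room, which lets me also absorb the perturbation needed for $+$-regularity rather than mere purity.

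The main obstacle is producing the nonvanishing coefficient from smoothness at level $p^2$. Characteristic-$p$ global $F$-regularity only needs $f^{p-1}\notin\m^{[p]}$, whereas here one must handle the $\delta(f)$ term, where cancellations can occur. I would first try to control it with a Fedder-type computation on $\delta(f)f^{p(p-1)}$ in the graded pieces, using $p\nmid d$ to reduce to a Jacobian monomial. Finally, the special case follows by checking that $p\ge\dim X=N-1$ implies the inequality. This holds because $Q-d\ge 1$ gives $(Q-d)p^2\ge p^2$, and $p^2+(p-N)d+Q>0$ holds for $p\ge N-1$ once one uses $d<Q$ and $Q\ge N+1$. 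If necessary I would treat the boundary case $p=N-1$ separately.
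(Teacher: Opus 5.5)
\textbf{Core step.} Several parts of your outer frame agree with the paper: reducing to $+$-regularity of the cone at $\m$ and applying \cite{Onuki}*{Lemma~2.2}; using $p\nmid d$ and Euler's relation to make $J(\bar f)$ $\bar\m$-primary; and checking that $p\ge N-1$ gives the inequality. But the core step is a hope rather than an argument, and it aims at too weak a target.

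The criterion of \cite{Yoshikawa25-cri} concerns the whole sequence $\bs(f)$. One needs $s_n\le p-1$ for \emph{every} $n$, and then $s_n=0$ for $n\gg0$. For an isolated singularity this gives perfectoid BCM-regularity of $(A/f)_\m$ by \cite{Yoshikawa25-cri}*{Theorem~6.4}. Your plan falls short of this in three ways:
\begin{enumerate}
\item A single nonvanishing of $\Delta(f)f^{p^2-p}$ modulo $\m^{[p^2]}$ controls neither $s_n$ for $n\ge 3$ nor eventual vanishing.
\item The cancellation problem in the $\Delta(f)$ term, which you correctly flag, is left open.
\item The ``perturbation'' step for $+$-regularity is never specified. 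Note also that $f^{p-1}\notin\m^{[p]}$ is Fedder's criterion for $F$-purity, not $F$-regularity.
\end{enumerate}

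\textbf{The missing mechanism.} First, $J(\bar f)$ is generated by a regular sequence of degrees $d-q_i$. So $\bar A/J(\bar f)$ is Artinian Gorenstein of socle degree $(N+1)d-2Q$, and hence $\bar A_{\ge (N+1)d-2Q+1}\subseteq J(\bar f)$. Comparing with the socle degree $(p^e-1)Q$ of $\bar A/\m^{[p^e]}\bar A$ yields Lemma~\ref{lem:jacobian-degree-membership}: if $HJ(f)\subseteq\m^{[p^e]}$ and $\deg H<p^eQ+Q-(N+1)d$, then $H\in\m^{[p^e]}$. So the $N$-dependence comes from the Jacobian socle degree, not from $\Delta(f)$.

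Second, one differentiates memberships in $\m^{[p^n]}$ using $\partial\Delta(f)/\partial x_i\equiv f^{p-1}f_i-x_i^{p-1}f_i^p \pmod p$. This happens inside an induction on $n$ that carries an auxiliary statement: $L_n\,(f,J(f))\subseteq\m^{[p^n]}$ for $L_n=f(s_1,\dots,s_{n-1},s_n+1)$, whenever $1\le s_n\le p-1$. This auxiliary statement is exactly what kills the troublesome terms, since $L_{n-1}^pf_i^p=(L_{n-1}f_i)^p\in\m^{[p^n]}$. It lets you strip powers of $f$ and $\Delta(f)$ off $L_{n-1}^p\Delta(f)^{s_{n-1}}$ one at a time, each step needing the degree bound $(N_n)$. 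You end with $L_{n-1}\in\m^{[p^{n-1}]}$, contradicting the maximality of $s_{n-1}$.

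Third, your inequality is precisely $(N_2)$. For $Q\ge d$ the conditions $(N_m)$ only get weaker as $m$ grows, so $(N_2)$ covers all levels.

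Finally, since $Q>d$, for large $n$ one has $(p^n-2)d<p^nQ+Q-(N+1)d$. As $\deg L_n=(p^n-s_n-1)d$, the same lemma forces $s_n=0$. This eventual vanishing, not a perturbation, is where the Fano hypothesis upgrades purity to $+$-regularity.

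\textbf{Lifts.} Separately, you write an arbitrary unramified lift as $\Proj C[x_0,\dots,x_N]/(f)$ without justification. The statement concerns \emph{every} unramified lift, so this must be proved, as in Proposition~\ref{prop:hypersurface-lift}:
\begin{enumerate}
\item $H^2(X,\cO_X)=0$, from the cohomology of weighted projective space and using $d<Q$ when $\dim X=2$. This lets $\cO_X(1)$ lift to a line bundle $\wt{L}$.
\item $H^1(X,\cO_X(m))=0$ makes $\bigoplus_{m\ge0}H^0(\wt X,\wt L^{\otimes m})$ a flat lift of $k[x_0,\dots,x_N]/(f)$.
\item Graded Nakayama then identifies this ring with $C[x_0,\dots,x_N]/(\wt f)$.
\end{enumerate}
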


\begin{ackn}
The author wishes to express his gratitude to Teppei Takamatsu and Jakub Witaszek for valuable discussions.
The author was supported by JSPS KAKENHI Grant number JP24K16889.
\end{ackn}

\section{Numerical criteria from splitting-order sequences}
In this section, we establish uniform bounds for the splitting-order sequence of a weighted homogeneous hypersurface. 
These bounds will imply perfectoid purity in the Calabi--Yau case and eventual vanishing in the Fano case.

We fix a prime number $p$.

\begin{notation}
Throughout this section, we use the following notation.

\begin{itemize}
\item
$C$ denotes a complete discrete valuation ring of mixed characteristic $(0,p)$
with maximal ideal $(p)$ and residue field $k$.
The Frobenius lift of $C$ is denoted by $\phi_C$.

\item
\[
A:=C[x_0,\dots,x_N]
\]
is endowed with the weighted grading
\[
\deg(x_i)=q_i>0 \qquad (0\le i\le N),
\]
and we set
\[
Q:=q_0+\cdots+q_N.
\]

\item
$f\in A$ is a homogeneous element of degree $d$.

\item
$\phi\colon A\to A$ denotes the Frobenius lift determined by
\[
\phi(x_i)=x_i^p \qquad (0\le i\le N), \quad \phi|_{C}=\phi_C
\]
and
\[
\Delta(f):=\frac{f^p-\phi(f)}{p}.
\]
We note that $\deg(\Delta(f))=pd$.

\item
We write
\[
\m:=(p,x_0,\dots,x_N),
\qquad
\m^{[p^e]}:=(p,x_0^{p^e},\dots,x_N^{p^e})
\]
for every $e\ge 1$.

\item
We set
\[
\bar A:=A/pA,
\qquad
\bar f:=f\bmod p,
\qquad
\bar{\m}:=(x_0,\dots,x_N)\subset \bar A,
\]
and
\[
X:=\Proj\bigl(\bar A/(\bar f)\bigr).
\]

\item
For $0\le i\le N$, we write
\[
f_i:=\frac{\partial f}{\partial x_i},
\qquad
J(f):=(f_0,\dots,f_N).
\]

\item
Following \cite{Yoshikawa25-cri}, we denote by
\[
\bs(f)=(s_0,s_1,s_2,\dots)
\]
the splitting-order sequence of $f$. If $s_{n-1} \leq p-1$, it is characterized by
\[
s_n=\max\{\,0\le s\le p\mid f(s_1,\dots,s_{n-1},s)\in \m^{[p^n]}\,\},
\]
and
\[
f(s_1,\dots,s_{n-1},s)
=
f(s_1,\dots,s_{n-2},s_{n-1}+1)^p\,\Delta(f)^{s_{n-1}}f^{p-s}
\]
for every $n\ge 1$.

\item
Since $X$ is a hypersurface in the weighted projective space $\mathbb P(q_0,\dots,q_N)$,
we have
\[
\dim X=N-1.
\]

\item
For $m\ge 2$, we consider the numerical condition
\[
(N_m)\qquad d(p^m-p-1)<p^mQ+Q-(N+1)d.
\]
\end{itemize}
\end{notation}

The following lemma is proved by the same argument as in \cite{Bhatt-Singh}*{Lemma~3.2}, but since we work in a weighted setting, we include the proof for the convenience of the reader.

\begin{lemma}\label{lem:weighted-jacobian-containment}
Assume that $J(\bar f)\subset \bar A$ is $\bar\m$-primary. Then
\[
\bar A_{\ge (N+1)d-2Q+1}\subseteq J(\bar f).
\]
\end{lemma}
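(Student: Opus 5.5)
Since $J(\bar f)$ is $\bar\m$-primary and $\bar A=k[x_0,\dots,x_N]$ is a polynomial ring in $N+1$ variables, the partials $\bar f_0,\dots,\bar f_N$ form a homogeneous system of parameters. Hence they form a regular sequence in the Cohen--Macaulay graded ring $\bar A$. Here $\bar f_i$ is homogeneous of degree $d-q_i$; if some $\bar f_i=0$, the ideal could not be $\bar\m$-primary with only $N$ generators, so all partials are nonzero. I will argue via the Hilbert series of the complete intersection $R:=\bar A/J(\bar f)$, following \cite{Bhatt-Singh}*{Lemma~3.2}.

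\textbf{Step 1.} Because $\bar f_0,\dots,\bar f_N$ is a regular sequence of homogeneous elements of degrees $d-q_i$, the Hilbert series of $R$ is
\[
H_R(t)=\prod_{i=0}^N\frac{1-t^{d-q_i}}{1-t^{q_i}}.
\]
Since $R$ is Artinian, $H_R(t)$ is a polynomial. Its degree, which is the socle degree (the $a$-invariant) of $R$, equals
\[
\sum_{i=0}^N (d-q_i)-\sum_{i=0}^N q_i=(N+1)d-2Q.
\]
Equivalently, I can compute it via the $a$-invariant. One has $a(\bar A)=-Q$, and modding out by a regular element of degree $e$ raises the $a$-invariant by $e$. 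Hence $a(R)=-Q+\sum_i(d-q_i)=(N+1)d-2Q$.

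\textbf{Step 2.} For an Artinian graded ring, the $a$-invariant is the largest $n$ with $R_n\neq 0$. Indeed, $H^0_{\bar\m}(R)=R$, so $a(R)=\max\{n: R_n\ne 0\}$. Therefore $R_n=0$ for all $n\ge (N+1)d-2Q+1$, which says exactly that $\bar A_{\ge (N+1)d-2Q+1}\subseteq J(\bar f)$.

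The only point needing care is the weighted setting. The $a$-invariant of a weighted polynomial ring is $-\sum q_i$, which I will check via its Hilbert series $\prod(1-t^{q_i})^{-1}$, whose degree as a rational function is $-Q$. I also need the fact that the top degree of the polynomial $H_R$ is the leading term of the product formula. This follows because the numerator and denominator are both monic-type up to sign, so the rational function has exact degree $(N+1)d-2Q$ and, being a polynomial with nonnegative coefficients, its top nonzero coefficient sits in that degree. I expect this degree bookkeeping to be the main (mild) obstacle. The alternative route via local duality and the Koszul complex gives $a(R)=a(\bar A)+\sum\deg \bar f_i$ directly, and I would use it if the Hilbert series argument becomes awkward.
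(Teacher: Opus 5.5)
Your proof is correct and follows essentially the same route as the paper. Both arguments use that $\bar f_0,\dots,\bar f_N$ is a homogeneous regular sequence, and both compute the top nonzero degree of the Artinian complete intersection $R=\bar A/J(\bar f)$ as $\sum_i(d-q_i)-Q=(N+1)d-2Q$. The paper reads this number off from $\omega_R\cong R\bigl((N+1)d-2Q\bigr)$ and the Gorenstein socle degree, while you obtain it more elementarily from degree additivity in $\mathbb{Z}[t]$ applied to the identity $H_R(t)\prod_i(1-t^{q_i})=\prod_i(1-t^{d-q_i})$.
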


\begin{proof}
Since $J(\bar f)=(\bar f_0,\dots,\bar f_N)$ is $\bar\m$-primary, the sequence $\bar f_0,\dots,\bar f_N$ is a homogeneous regular sequence in $\bar A$, with
\[
\deg(\bar f_i)=d-q_i.
\]
Set $R:=\bar A/J(\bar f)$.
Because $\omega_{\bar A}\cong \bar A(-Q)$,  we have
\[
\omega_R\cong R\Bigl(\sum_{i=0}^N(d-q_i)-Q\Bigr)
=R\bigl((N+1)d-2Q\bigr).
\]
Since $R$ is Artinian Gorenstein, its socle degree is $(N+1)d-2Q$. Therefore
\[
R_m=0
\qquad\text{for every }m>(N+1)d-2Q,
\]
which is equivalent to
\[
\bar A_{\ge (N+1)d-2Q+1}\subseteq J(\bar f).
\qedhere
\]
\end{proof}

\begin{lemma}\label{lem:weighted-colon-estimate}
For every integer $k\ge 0$ and every $e\ge 1$, one has
\[
\bigl(\m^{[p^e]}\var{A}:\bar A_{\ge k}\bigr)
\subseteq
\m^{[p^e]}\var{A}+\bar A_{\ge p^eQ-Q-k+1}.
\]
\end{lemma}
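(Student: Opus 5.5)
In $\bar A$ the image of $\m^{[p^e]}$ is $I:=(x_0^{p^e},\dots,x_N^{p^e})$, because $p$ becomes zero. So the statement is purely about the graded polynomial ring $\bar A=k[x_0,\dots,x_N]$. The claim is
\[
(I:\bar A_{\ge k})\subseteq I+\bar A_{\ge p^eQ-Q-k+1}.
\]
The plan is to use the fact that $\bar A/I$ is an Artinian Gorenstein graded ring. It has a basis of monomials $x^a$ with $0\le a_i\le p^e-1$. Its socle is spanned by $x_0^{p^e-1}\cdots x_N^{p^e-1}$, which has degree $\sigma:=(p^e-1)Q$. The pairing $(\bar A/I)_j\times(\bar A/I)_{\sigma-j}\to(\bar A/I)_\sigma\cong k$ is perfect.

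\emph{Reduction to homogeneous elements.} Both $I$ and $\bar A_{\ge k}$ are homogeneous ideals, so the colon ideal is homogeneous. The right-hand side is also a homogeneous ideal. It therefore suffices to take a homogeneous $g$ of degree $j$ with $g\bar A_{\ge k}\subseteq I$ and $g\notin I$, and to show $j\ge \sigma-k+1$.

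\emph{Main step.} Since $g\notin I$, its image in $(\bar A/I)_j$ is nonzero. I would prove this step directly with monomials rather than citing duality. Write $g=\sum c_a x^a$ modulo $I$, with all exponents $a_i\le p^e-1$, and pick a monomial $x^a$ with $c_a\ne0$. Set $h:=x^{b}$ with $b_i=p^e-1-a_i$. Then $gh\equiv c_a x_0^{p^e-1}\cdots x_N^{p^e-1}\not\equiv 0 \pmod I$. This holds because for every other monomial $x^{a'}$ with $a'\ne a$, some exponent of $x^{a'+b}$ is at least $p^e$, so $x^{a'}h\in I$. The element $h$ is homogeneous of degree $\sigma-j$. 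If $\sigma-j\ge k$, then $h\in\bar A_{\ge k}$, so $gh\in I$, a contradiction. Hence $\sigma-j\le k-1$, that is, $j\ge p^eQ-Q-k+1$.

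\emph{Expected obstacle.} The one point needing care is the grading in the weighted setting. We need $\deg h=\sigma-j$ exactly, and this holds because each of $g$, $x^a$ and $h$ is homogeneous for the weighted degree, so $\deg x^a=j$. There are also edge cases, $k=0$ or $j>\sigma$, but these are trivial: if $j>\sigma$ then $g\in I$ automatically, since $(\bar A/I)_j=0$.
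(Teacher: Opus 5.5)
Your proposal is correct and is essentially the paper's argument: take a homogeneous $G\notin \m^{[p^e]}\bar A$ in the colon, pick a monomial $x^a$ of $G$ with all $a_i\le p^e-1$, multiply by the complementary monomial $x^b$ of weighted degree $(p^e-1)Q-\deg G$ to reach the socle monomial $x_0^{p^e-1}\cdots x_N^{p^e-1}$, and compare degrees. Your explicit reduction of $g$ modulo the monomial ideal $I$ and your check that $g x^b\notin I$ fill in details the paper leaves implicit; for the latter, every other monomial $x^{a'}$ of the same weighted degree must have some $a'_i>a_i$ because all $q_i>0$, and this is the step you left unstated.
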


\begin{proof}
Let $G\in (\m^{[p^e]}\var{A}:\bar A_{\ge k})$ be a homogeneous element.
We assume $G\notin \m^{[p^e]}\var{A}$.
Let $M=x_0^{a_0}\cdots x_N^{a_N}$ be a monomial such that the coefficient of $M$ in $G$ is non-zero.
Then we have $a_i \le p^e-1$ for every $0 \leq i \leq N$.
Suppose that
\[
\deg(G)<p^eQ-Q-k+1,
\]
then $b_i=p^e-1-a_i \geq 0$ satisfy
\[
\sum_{i=0}^N b_iq_i=\sum_{i=0}^N(p^e-1-a_i)q_i=(p^e-1)Q-\deg(G) \geq k.
\]
Then we have
\[
G x_0^{b_0}\cdots x_{N}^{b_N} \in G \var{A}_{\geq k} \subseteq \m^{[p^e]}\var{A}.
\]
This contradicts the fact that
\[
Mx^{b_0}\cdots x_{N}^{b_N}=x_0^{p^e-1}\cdots x_N^{p^e-1} \notin \m^{[p^e]}\var{A}.
\]

Therefore we have
\[
(\m^{[p^e]}\var{A}:\bar A_{\ge k})\subseteq \m^{[p^e]}\var{A}+\bar A_{\ge p^eQ-Q-k+1}.
\qedhere
\]
\end{proof}

\begin{lemma}\label{lem:jacobian-degree-membership}
Let $H\in A$ be a homogeneous element. Assume that
\[
HJ(f)\subseteq \m^{[p^e]}
\qquad\text{and}\qquad
\deg(H)<p^eQ+Q-(N+1)d.
\]
Then
\[
H\in \m^{[p^e]}.
\]
\end{lemma}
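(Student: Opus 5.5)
The plan is to reduce modulo $p$ and chain together Lemma~\ref{lem:weighted-jacobian-containment} and Lemma~\ref{lem:weighted-colon-estimate}. I assume the standing hypothesis of Lemma~\ref{lem:weighted-jacobian-containment}, namely that $J(\bar f)\subset\bar A$ is $\bar\m$-primary (this is what smoothness of $X$ provides). The key steps, in order, are as follows.

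\emph{Reduction modulo $p$.} Since $p\in\m^{[p^e]}$, we have $\m^{[p^e]}=\pi^{-1}(\m^{[p^e]}\bar A)$, where $\pi\colon A\to\bar A$ is reduction modulo $p$. So it suffices to show $\bar H\in\m^{[p^e]}\bar A$. Reducing the hypothesis modulo $p$, and using $\overline{f_i}=\bar f_i$ (partial derivatives commute with reduction), gives
\[
\bar H\,J(\bar f)\subseteq\m^{[p^e]}\bar A .
\]

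\emph{Applying the two lemmas.} Set $k:=(N+1)d-2Q+1$.
\begin{itemize}
\item If $k\le 0$, then $\bar A_{\ge k}=\bar A$, so Lemma~\ref{lem:weighted-jacobian-containment} gives $J(\bar f)=\bar A$. Hence $\bar H\in\m^{[p^e]}\bar A$ immediately.
\item If $k\ge 1$, Lemma~\ref{lem:weighted-jacobian-containment} gives $\bar A_{\ge k}\subseteq J(\bar f)$. Therefore $\bar H\in(\m^{[p^e]}\bar A:\bar A_{\ge k})$, and Lemma~\ref{lem:weighted-colon-estimate} yields
\[
\bar H\in\m^{[p^e]}\bar A+\bar A_{\ge p^eQ-Q-k+1}=\m^{[p^e]}\bar A+\bar A_{\ge p^eQ+Q-(N+1)d}.
\]
\end{itemize}

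\emph{Degree argument.} Both $\m^{[p^e]}\bar A$ and $\bar A_{\ge m}$ are homogeneous ideals, and $\bar H$ is homogeneous. We may take the homogeneous component of degree $\deg H$ on both sides. By assumption $\deg H<p^eQ+Q-(N+1)d$, so the component of $\bar A_{\ge p^eQ+Q-(N+1)d}$ in degree $\deg H$ vanishes. Hence $\bar H\in\m^{[p^e]}\bar A$, and therefore $H\in\m^{[p^e]}$.

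The only point needing care is the bookkeeping in the second step: the degree bound in Lemma~\ref{lem:weighted-colon-estimate} must match the hypothesis $\deg H<p^eQ+Q-(N+1)d$ exactly. Substituting $k$ gives $p^eQ-Q-k+1=p^eQ+Q-(N+1)d$, so it does. The degenerate case $k\le 0$ must also be treated separately, as above, since Lemma~\ref{lem:weighted-colon-estimate} is stated for $k\ge 0$ and is only needed when $k\ge 1$.
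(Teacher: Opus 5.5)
Your argument is correct and is exactly the paper's proof: reduce modulo $p$, enlarge the colon ideal via $\bar A_{\ge (N+1)d-2Q+1}\subseteq J(\bar f)$ from Lemma~\ref{lem:weighted-jacobian-containment}, apply Lemma~\ref{lem:weighted-colon-estimate}, and discard $\bar A_{\ge (p^e+1)Q-(N+1)d}$ by degree. Your separate case $k\le 0$ is harmless but vacuous, since $(N+1)d-2Q$ is the socle degree of the nonzero Artinian ring $\bar A/J(\bar f)$ and hence non-negative. Also, the $\bar\m$-primary hypothesis you make explicit comes from quasi-smoothness together with $p\nmid d$ (via the Euler relation), not from smoothness alone.
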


\begin{proof}
Let $\bar H$ denote the image of $H$ in $\bar A=A/pA$.
Then
\[
\bar H\in \bigl(\m^{[p^e]}\var{A}:J(\bar f)\bigr).
\]
By Lemma~\ref{lem:weighted-jacobian-containment} and Lemma~\ref{lem:weighted-colon-estimate}, the right-hand side is
\[
\bigl(\m^{[p^e]}\var{A}:J(\bar f)\bigr) \subseteq \bigl(\m^{[p^e]}\var{A}:A_{\geq (N+1)d-2Q+1}\bigr) \subseteq \m^{[p^e]}\var{A}+\var{A}_{\geq (p^e+1)Q-(N+1)d}.
\]
Since $\deg(H)<(p^e+1)Q-(N+1)d$, we have $H \in \m^{[p^e]}$, as desired.
\end{proof}

\begin{lemma}\label{lem:delta-derivative}
For every $0\le i\le N$, one has
\[
\frac{\partial \Delta(f)}{\partial x_i}
=
f^{p-1}f_i-x_i^{p-1}\phi(f_i).
\]
In particular, modulo $p$,
\[
\frac{\partial \Delta(f)}{\partial x_i}
\equiv
f^{p-1}f_i-x_i^{p-1}f_i^p
\pmod p.
\]
\end{lemma}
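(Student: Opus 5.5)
We differentiate the defining identity $p\,\Delta(f)=f^p-\phi(f)$ term by term. Since $A=C[x_0,\dots,x_N]$ is a polynomial ring over the $p$-torsion-free ring $C$, it is $p$-torsion free, and $\partial/\partial x_i$ is a $C$-linear derivation of $A$. It therefore suffices to show
\[
p\,\frac{\partial \Delta(f)}{\partial x_i}=p\bigl(f^{p-1}f_i-x_i^{p-1}\phi(f_i)\bigr)
\]
and then cancel $p$.

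For the first term, the Leibniz rule gives $\partial(f^p)/\partial x_i=pf^{p-1}f_i$.

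For the second term, we need the chain-rule identity
\[
\frac{\partial \phi(g)}{\partial x_i}=p\,x_i^{p-1}\phi\Bigl(\frac{\partial g}{\partial x_i}\Bigr)\qquad(g\in A).
\]
Both sides are additive in $g$. They are also $\phi_C$-semilinear: if $c\in C$, then $\phi(cg)=\phi_C(c)\phi(g)$, and $\partial/\partial x_i$ is $C$-linear. So it is enough to check the identity on monomials $g=x_0^{a_0}\cdots x_N^{a_N}$. In that case $\phi(g)=x_0^{pa_0}\cdots x_N^{pa_N}$, and
\[
\frac{\partial \phi(g)}{\partial x_i}=pa_i\,x_i^{pa_i-1}\prod_{j\ne i}x_j^{pa_j}.
\]
On the other side,
\[
p\,x_i^{p-1}\phi\bigl(a_i x_i^{a_i-1}\textstyle\prod_{j\ne i}x_j^{a_j}\bigr)=p\,x_i^{p-1}a_i\,x_i^{p(a_i-1)}\prod_{j\ne i}x_j^{pa_j}.
\]
The integer $a_i$ is fixed by $\phi_C$, so the two expressions agree, as $x_i^{p-1}x_i^{p(a_i-1)}=x_i^{pa_i-1}$. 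When $a_i=0$ both sides vanish.

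Combining the two terms gives the displayed identity, and cancelling $p$ yields the first formula.

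For the congruence modulo $p$, it suffices to show $\phi(g)\equiv g^p\pmod p$ for every $g\in A$, and then apply it to $g=f_i$. The map $g\mapsto \phi(g)-g^p \bmod p$ is compatible with sums, because $(g+h)^p\equiv g^p+h^p$ and $\phi$ is additive. It holds on the variables, since $\phi(x_i)=x_i^p$. It also holds on coefficients, since $\phi_C$ lifts the Frobenius of $k$, so $\phi_C(c)\equiv c^p\pmod p$. Both $\phi$ and $g\mapsto g^p \bmod p$ are ring homomorphisms $A\to A/pA$, and they agree on the generators $C$ and $x_0,\dots,x_N$, so they agree on all of $A$.

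The only delicate point is keeping track of the semilinearity of $\phi$ on coefficients when reducing to monomials; apart from that the computation is routine.
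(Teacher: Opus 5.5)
Your proof is correct and follows the paper's argument: differentiate $p\,\Delta(f)=f^p-\phi(f)$, use $\partial\phi(f)/\partial x_i=p\,x_i^{p-1}\phi(f_i)$, cancel $p$, and reduce modulo $p$ via $\phi(g)\equiv g^p \pmod p$. The only difference is that you write out what the paper leaves implicit, namely this chain-rule identity (checked on monomials using $\phi_C$-semilinearity), the $p$-torsion-freeness of $A$ needed to divide by $p$, and the congruence $\phi(g)\equiv g^p$ on all of $A$.
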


\begin{proof}
We have
\[
\frac{\partial \Delta(f)}{\partial x_i}
=
\frac{p f^{p-1}f_i-\frac{\partial \phi(f)}{\partial x_i}}{p}=f^{p-1}f_i-x_i^{p-1}\phi(f_i).
\]

Finally, since $\phi$ is a Frobenius lift, we have 
\[
\frac{\partial \Delta(f)}{\partial x_i}
\equiv
f^{p-1}f_i-x_i^{p-1}f_i^p
\pmod p.
\qedhere
\]
\end{proof}

\begin{theorem}\label{thm:sn-le-p-minus-1}
Assume that the Jacobian ideal $J(\bar f)\subset \bar A$ is $\bar{\m}$-primary.

\begin{enumerate}
\item
Let $n\ge 2$. If $(N_m)$ holds for every $m$ with $2\le m\le n$, then
\[
s_i\le p-1
\qquad\text{for every }1\le i\le n.
\]

\item
If $Q\ge d$ and $(N_2)$ holds, then
\[
s_n\le p-1
\qquad\text{for every }n\ge 1.
\]

\item
In particular, if $Q=d$ and $p>\dim X$, then
\[
s_n\le p-1
\qquad\text{for every }n\ge 1.
\]
\end{enumerate}
\end{theorem}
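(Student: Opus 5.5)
We argue by induction on $n$ and by contradiction, using Lemma~\ref{lem:jacobian-degree-membership} as the main tool. Suppose $s_1,\dots,s_{n-1}\le p-1$ but $s_n=p$. Then $s_n$ is defined by the displayed formula, and $s_n=p$ means
\[
G:=f(s_1,\dots,s_{n-1},p)=f(s_1,\dots,s_{n-2},s_{n-1}+1)^p\,\Delta(f)^{s_{n-1}}\in \m^{[p^n]}.
\]
The idea is to differentiate this membership. Since $\m^{[p^n]}$ is generated by $p$ and $p^n$-th powers of the variables, the ideal is stable under $\partial/\partial x_i$ modulo $p$. The $p$-th power factor $F^p$, where $F:=f(s_1,\dots,s_{n-2},s_{n-1}+1)$, has derivative $\equiv 0 \pmod p$. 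Hence
\[
F^p\,s_{n-1}\,\Delta(f)^{s_{n-1}-1}\,\partial_i\Delta(f)\in \m^{[p^n]}.
\]
Applying Lemma~\ref{lem:delta-derivative}, this becomes
\[
F^p\Delta(f)^{s_{n-1}-1}\bigl(f^{p-1}f_i-x_i^{p-1}f_i^p\bigr)\in\m^{[p^n]}.
\]
We then handle the second term separately, reducing to $H f_i\in \m^{[p^n]}$ for all $i$ with
\[
H:=F^p\Delta(f)^{s_{n-1}-1}f^{p-1}.
\]
The cleanest route may instead be to first show $x_i^{p-1}\cdot(\text{stuff})\,f_i^p$ lies in $\m^{[p^n]}$ using the defining maximality of $s_{n-1}$ together with Frobenius flatness ($\m^{[p^{n}]}:x^p$-type colon arguments).

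Once $HJ(f)\subseteq\m^{[p^n]}$ is obtained, Lemma~\ref{lem:jacobian-degree-membership} yields $H\in\m^{[p^n]}$, provided
\[
\deg H<p^nQ+Q-(N+1)d.
\]
But $H$ is, up to a factor, $f(s_1,\dots,s_{n-1},s)$ with $s$ exceeding what maximality allows, or it contradicts the maximality of $s_{n-1}$ at the previous level. The degree bookkeeping goes as follows. Since $f$ has degree $d$ and $\Delta(f)$ has degree $pd$, each $f(s_1,\dots,s_{n-1},s)$ is homogeneous of degree $d(p^n-1)$. This holds inductively: $\deg=p\cdot d(p^{n-1}-1)+pd\,s_{n-1}+d(p-s)$ with the right normalization. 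Hence $\deg H=d(p^n-1)-pd-d=d(p^n-p-1)$ roughly. The inequality required by Lemma~\ref{lem:jacobian-degree-membership} is then exactly $(N_n)$. This explains the shape of $(N_m)$ and gives part (1), with the base cases $s_1\le p-1$ handled similarly (using $(N_2)$, or directly from $f^p\notin \m^{[p]}$ after differentiating).

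For (2), we observe that $(N_m)$ can be rewritten as
\[
p^m(Q-d)+Q-(N+1)d+d(p+1)>0.
\]
When $Q\ge d$, the left side is nondecreasing in $m$, so $(N_2)$ implies $(N_m)$ for all $m\ge2$, and (1) applies for every $n$. For (3), with $Q=d$, the condition $(N_2)$ reads $d(p+1)+d-(N+1)d>0$, i.e.\ $p>N-1=\dim X$.

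The main obstacle is the differentiation step: controlling the extra term $x_i^{p-1}\phi(f_i)$ (equivalently $x_i^{p-1}f_i^p$ mod $p$) so that one genuinely gets a Jacobian-colon membership $HJ(f)\subseteq\m^{[p^n]}$ of the right degree. To handle it, we will try to use that $f_i^p$ times the prefactor is a $p$-th power expression. We then apply the inductive hypothesis/maximality of $s_{n-1}$ via the Frobenius colon identity $(\m^{[p^n]}:G^p)=(\m^{[p^{n-1}]}:G)^{[p]}$ to absorb it into $\m^{[p^n]}$.
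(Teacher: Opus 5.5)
\textbf{There is a genuine gap.} Your overall strategy matches the paper's: differentiate, use Lemma~\ref{lem:delta-derivative}, then apply Lemma~\ref{lem:jacobian-degree-membership} with the bound $(N_n)$. The gap is at the step you flag, and a second one follows right after it.

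\textbf{The $x_i^{p-1}f_i^p$ term.} To discard $s_{n-1}F^p\Delta(f)^{s_{n-1}-1}x_i^{p-1}f_i^p$, the Frobenius trick only reduces the problem to showing $Ff_i\in\m^{[p^{n-1}]}$ for all $i$; then $F^pf_i^p\equiv(Ff_i)^p\in\m^{[p^n]}$ modulo $p$. Maximality of $s_{n-1}$ cannot give this, since it only says $F\notin\m^{[p^{n-1}]}$. The bare hypothesis $s_1,\dots,s_{n-1}\le p-1$ does not give it either. It has to come from the \emph{membership} $Ff=f(s_1,\dots,s_{n-1})\in\m^{[p^{n-1}]}$ by differentiating. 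That differentiation again produces $\partial_i\Delta(f)$-terms, which need the same statement one level lower. So the induction must be strengthened. The paper proves simultaneously $(A_n)$: $s_n\le p-1$, and $(B_n)$: if $1\le s_n\le p-1$, then $L_n(f,J(f))\subseteq\m^{[p^n]}$, where $L_n=f(s_1,\dots,s_{n-1},s_n+1)$. Your $F$ is $L_{n-1}$, and $(B_{n-1})$ is exactly what kills this term. The case $s_{n-1}=0$, where your coefficient $s_{n-1}$ is not a unit, must be treated separately. There it is trivial: $G=F^p\in\m^{[p^n]}$ already gives $F\in\m^{[p^{n-1}]}$.

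\textbf{No contradiction yet.} Even granting $H=F^p\Delta(f)^{s_{n-1}-1}f^{p-1}\in\m^{[p^n]}$, nothing is contradicted. $H$ is not of the form $f(s_1,\dots,s_{n-1},s)$. Moreover $H\Delta(f)=f(s_1,\dots,s_{n-1},1)$, which lies in $\m^{[p^n]}$ anyway once $s_n=p$. The paper iterates, writing $H_{k,t}:=F^pf^k\Delta(f)^t$. Your step takes $H_{0,t}\in\m^{[p^n]}$ to $H_{p-1,t-1}\in\m^{[p^n]}$. Then, for $k\ge1$, $H_{k,t-1}\in\m^{[p^n]}$ implies $H_{k-1,t-1}J(f)\subseteq\m^{[p^n]}$; the extra terms die by $(Ff)^p\in\m^{[p^n]}$ and $(B_{n-1})$. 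Lemma~\ref{lem:jacobian-degree-membership} then gives $H_{k-1,t-1}\in\m^{[p^n]}$. One descends $k$ from $p-1$ to $0$, and repeats this for $t=s_{n-1},\dots,1$, until $F^p\in\m^{[p^n]}$. Flatness of Frobenius on $\bar A$ then gives $F\in\m^{[p^{n-1}]}$, contradicting maximality of $s_{n-1}$.

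\textbf{Degrees and parts (2), (3).} The degree bound is needed for every $H_{k,t}$ with $k\le p-1$ and $t\le s_{n-1}-1$. Your $H=H_{p-1,s_{n-1}-1}$ has the largest degree, $(p^n-p-1)d$, so your identification of $(N_n)$ is right. Note, however, that $\deg f(s_1,\dots,s_{n-1},s)=(p^n-s)d$, not $d(p^n-1)$. Your arguments for parts (2) and (3) are correct.
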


\begin{proof}
We set
\[
L_n:=f(s_1,\dots,s_{n-1},s_n+1)=L_{n-1}^p\Delta(f)^{s_{n-1}}f^{p-(s_n+1)}
\]
for every $n\ge 1$ with $s_n\le p-1$.
Thus, the degree is
\[
\deg(L_n)=p\deg(L_{n-1})+(ps_{n-1}+p-(s_n+1))d
\]
and $\deg(L_1)=(p-s_1-1)d$.
Therefore, we obtain
\begin{equation}\label{eq:deg-L}
   \deg(L_n)=(p^n-s_n-1)d 
\end{equation}
for every $n\ge 1$ with $s_n\le p-1$.
We prove simultaneously the following two assertions by induction on $n$:

\noindent
$(A_n)$ One has
\[
s_n\le p-1.
\]

\noindent
$(B_n)$ If $1\le s_n\le p-1$, then
\[
L_n(f,J(f))\subseteq \m^{[p^n]}.
\]

We begin with $n=1$. Since $s_1=p$ would imply $1=f^{p-p}\in \m^{[p]}$, we get $s_1\le p-1$, proving $(A_1)$.

Next, assume $1\le s_1\le p-1$. By definition of $s_1$,
\[
L_1f=f^{p-s_1}\in \m^{[p]}.
\]
Differentiating, we obtain
\[
(p-s_1)L_1f_i\in \m^{[p]}
\qquad (0\le i\le N).
\]
Since $1\le s_1\le p-1$, the coefficient $p-s_1$ is a unit modulo $p$, and hence
\[
L_1f_i\in \m^{[p]}
\qquad (0\le i\le N).
\]
Therefore $L_1(f,J(f))\subseteq \m^{[p]}$, so $(B_1)$ also holds.

Now fix $n\ge 2$, and assume that $(A_i)$ and $(B_i)$ hold for all $1\le i\le n-1$.
For $0\le k,t\le p-1$, set
\[
H_{k,t}:=L_{n-1}^pf^k\Delta(f)^t.
\]
We note that if $t \leq s_{n-1}-1$, we have
\begin{align*}
   \deg(H_{k,t})
   &=p\deg(L_{n-1})+(k+pt)d \leq p(p^{n-1}-s_{n-1}-1)d+(p-1+p(s_{n-1}-1))d \\
   &=(p^{n}-p-1)d<p^nQ+Q-(N+1)d,
\end{align*}
where the first equality follows from \eqref{eq:deg-L} and the last equality follows from the condition $(N_n)$.
Thus, we have
\begin{equation}\label{eq:degree-H}
    \deg(H_{k,t}) <p^nQ+Q-(N+1)d \quad \text{for $t \leq s_{n-1}-1$}.
\end{equation}

\noindent
{\bf Proof of $(B_n)$.}
First, we note that
\[
L_{n-1}f=f(s_1,\dots,s_{n-1})\in \m^{[p^{n-1}]}
\]
by the definition of $\bs(f)$.

\begin{claim}\label{claim:fkdelta}
For $1\le k,t\le p-1$, if $H_{k,t}\in \m^{[p^n]}$ and $s_{n-1}\ge 1$, then
\[
H_{k-1,t}J(f)\subseteq \m^{[p^n]}.
\]
\end{claim}

\begin{proof}
Differentiating $H_{k,t}$, we obtain
\[
\frac{\partial H_{k,t}}{\partial x_i}
\equiv
kH_{k-1,t}f_i
+
tH_{k,t-1}f^{p-1}f_i
-
tL_{n-1}^pf^k\Delta(f)^{t-1}x_i^{p-1}f_i^p
\pmod p
\]
by Lemma~\ref{lem:delta-derivative}.
Since $H_{k,t}\in \m^{[p^n]}$, the derivative belongs to $\m^{[p^n]}$.
The second and third terms are contained in $\m^{[p^n]}$, because
\begin{align*}
H_{k,t-1}f^{p-1}
&=
(L_{n-1}f)^pf^{k-1}\Delta(f)^{t-1}\in \m^{[p^n]},\\
L_{n-1}^pf_i^p
&=(L_{n-1}f_i)^p\in \m^{[p^n]}
\end{align*}
by $(B_{n-1})$. Since $1\le k\le p-1$, we have
\[
H_{k-1,t}J(f)\subseteq \m^{[p^n]}.
\qedhere
\]
\end{proof}

Assume $1\le s_n\le p-1$. 
By definition of $s_n$, we have
\[
L_nf=f(s_1,\dots,s_n)\in \m^{[p^n]}.
\]

If $s_{n-1}=0$, then
\[
\frac{\partial(L_nf)}{\partial x_i}=\frac{\partial(L_{n-1}^pf^{p-s_n})}{\partial x_i}
\equiv (p-s_n)L_nf_i \pmod{p}.
\]
Since $L_nf\in \m^{[p^n]}$, the left-hand side belongs to $\m^{[p^n]}$. 
Since $1\le s_n\le p-1$, we have
\[
L_nf_i\in \m^{[p^n]}
\qquad (0\le i\le N).
\]
Thus
\[
L_n(f,J(f))\subseteq \m^{[p^n]}.
\]

Assume now that $1\le s_{n-1}$. Applying Claim~\ref{claim:fkdelta} to
\[
H_{p-s_n,s_{n-1}}
=
L_{n-1}^pf^{p-s_n}\Delta(f)^{s_{n-1}}
=
L_nf \in \m^{[p^n]},
\]
we obtain
\[
H_{p-s_n-1,s_{n-1}}J(f)=L_nJ(f)\subseteq \m^{[p^n]}.
\]
Together with $L_nf\in \m^{[p^n]}$, this proves
\[
L_n(f,J(f))\subseteq \m^{[p^n]}.
\]

\noindent
{\bf Proof of $(A_n)$.}
Suppose, for contradiction, that $s_n=p$. Then
\[
f(s_1,\dots,s_{n-1},p)=L_{n-1}^p\Delta(f)^{s_{n-1}}\in \m^{[p^n]}.
\]

If $s_{n-1}=0$, then $L_{n-1} \in \m^{[p^{n-1}]}$,
that is,
\[
L_{n-1}=f(s_1,\dots,s_{n-2},1)\in \m^{[p^{n-1}]},
\]
contrary to the maximality of $s_{n-1}=0$.

Thus we may assume $1\le s_{n-1}\le p-1$. By $(B_{n-1})$,
\[
L_{n-1}(f,J(f))\subseteq \m^{[p^{n-1}]}.
\]

We prove by descending induction on $t$, for $0\le t\le s_{n-1}$, that
\[
L_{n-1}^p\Delta(f)^t\in \m^{[p^n]}.
\]
The case $t=s_{n-1}$ is exactly the containment
\[
f(s_1,\dots,s_{n-1},p)=L_{n-1}^p\Delta(f)^{s_{n-1}}\in \m^{[p^n]}.
\]

It suffices to show the following claim.

\begin{claim}\label{cl:delta}
If $L_{n-1}^p\Delta(f)^t\in \m^{[p^n]}$ for $1\le t\le s_{n-1}$, then
\[
L_{n-1}^p\Delta(f)^{t-1}\in \m^{[p^n]}.
\]
\end{claim}

\begin{proof}
Differentiating $L_{n-1}^p\Delta(f)^t$, we obtain
\[
\frac{\partial\bigl(L_{n-1}^p\Delta(f)^t\bigr)}{\partial x_i}
\equiv
tL_{n-1}^pf^{p-1}\Delta(f)^{t-1}f_i
-
tL_{n-1}^p\Delta(f)^{t-1}x_i^{p-1}f_i^p
\pmod p
\]
by Lemma~\ref{lem:delta-derivative}. By assumption, the derivative belongs to $\m^{[p^n]}$.
The second term is contained in $\m^{[p^n]}$ by $(B_{n-1})$. Thus
\[
L_{n-1}^pf^{p-1}\Delta(f)^{t-1}f_i=H_{p-1,t-1}f_i\in \m^{[p^n]}
\qquad (0\le i\le N).
\]
Therefore,
\[
H_{p-1,t-1}J(f)\subseteq \m^{[p^n]}.
\]

Since $t-1 \leq s_{n-1}-1$, by \eqref{eq:degree-H}, we have
\[
\deg(H_{p-1,t-1}) <p^nQ+Q-(N+1)d.
\]
By Lemma~\ref{lem:jacobian-degree-membership}, we obtain
\[
H_{p-1,t-1}=L_{n-1}^pf^{p-1}\Delta(f)^{t-1}\in \m^{[p^n]}.
\]

If $t\ge 2$, then by Claim~\ref{claim:fkdelta},
\[
H_{p-2,t-1}J(f)=L_{n-1}^pf^{p-2}\Delta(f)^{t-1}J(f)\subseteq \m^{[p^n]}.
\]
By \eqref{eq:degree-H}, Lemma~\ref{lem:jacobian-degree-membership} yields
\[
H_{p-2,t-1}=L_{n-1}^pf^{p-2}\Delta(f)^{t-1}\in \m^{[p^n]}.
\]
Repeating this argument, we obtain
\[
L_{n-1}^p\Delta(f)^{t-1}\in \m^{[p^n]}.
\]

If $t=1$, then we already have
\[
L_{n-1}^pf^{p-1}\in \m^{[p^n]}.
\]
Differentiating $L_{n-1}^pf^{p-1}$, we get
\[
\frac{\partial (L_{n-1}^pf^{p-1})}{\partial x_i}
\equiv (p-1)L_{n-1}^pf^{p-2}f_i \pmod p,
\]
hence
\[
H_{p-2,0}J(f)=L_{n-1}^pf^{p-2}J(f)\subseteq \m^{[p^n]}.
\]
By \eqref{eq:degree-H} and  Lemma~\ref{lem:jacobian-degree-membership}, we have
\[
L_{n-1}^pf^{p-2}\in \m^{[p^n]}.
\]
Repeating this argument, we obtain
\[
L_{n-1}^p\in \m^{[p^n]}.
\]
\end{proof}

Thus, by descending induction on $t$, we obtain
\[
L_{n-1}\in \m^{[p^{n-1}]},
\]
that is,
\[
f(s_1,\dots,s_{n-2},s_{n-1}+1)\in \m^{[p^{n-1}]},
\]
contrary to the maximality of $s_{n-1}$.

This contradiction proves $(A_n)$. 
Hence, by induction, (1) holds for all $n$ satisfying $(N_n)$.

For (2), note that $(N_m)$ is equivalent to
\[
p^m(d-Q)+d(N-p)-Q<0.
\]
If $Q\ge d$, then the left-hand side is nonincreasing in $m$.
Hence $(N_2)$ implies $(N_m)$ for every $m\ge 2$.
The conclusion now follows from (1).

Finally, assume $Q=d$. Then $(N_2)$ becomes
\[
p>N-1=\dim X.
\]
Thus (3) follows from (2).
\end{proof}

\begin{corollary}\label{cor:CY-all-sn-bound}
Assume that the Jacobian ideal $J(\bar f)\subset \bar A$ is $\bar{\m}$-primary, $Q=d$ and $p>\dim X$. 
Then
\[
s_n\le \dim X
\qquad\text{for every }n\ge 1.
\]
In particular, $(A/f)^{\wedge p}$ is perfectoid pure and 
\[
\mathrm{ppt}((A/(f))^{\wedge p},p)\ge \frac{p-1-\dim X}{p-1},
\]
where $\mathrm{ppt}((A/(f))^{\wedge p},p)$ denotes the perfectoid purity threshold introduced in \cite{Yoshikawa25-cri}.
\end{corollary}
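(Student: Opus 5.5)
The plan is to sharpen the argument of Theorem~\ref{thm:sn-le-p-minus-1}: we already know $s_n\le p-1$ for all $n$, so every $L_n$ is defined and \eqref{eq:deg-L} gives $\deg(L_n)=(p^n-s_n-1)d$. The degree count can be pushed further to force $s_n\le N-1=\dim X$. Fix $n\ge1$ and suppose $s_n\ge N$; we aim for a contradiction.

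The key step is a degree bound on $L_n$. With $Q=d$ we have
\[
\deg(L_n)=(p^n-s_n-1)d\le (p^n-N-1)d<p^nQ+Q-(N+1)d=(p^n-N)d .
\]
The last inequality is automatic; the point is that once $s_n\ge N$ the element $L_n$ itself, not only the auxiliary elements $H_{k,t}$, lies in the degree range of Lemma~\ref{lem:jacobian-degree-membership}. By $(B_n)$ from the proof of Theorem~\ref{thm:sn-le-p-minus-1}, which applies since $1\le N\le s_n\le p-1$, we have $L_nJ(f)\subseteq \m^{[p^n]}$. Lemma~\ref{lem:jacobian-degree-membership} then gives
\[
L_n=f(s_1,\dots,s_{n-1},s_n+1)\in\m^{[p^n]},
\]
which contradicts the maximality in the definition of $s_n$. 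Hence $s_n\le N-1=\dim X$ for all $n$. Strictly, $(B_n)$ was only established inside the induction under $(N_m)$ for $m\le n$, but those conditions hold here by part (2) of the theorem, so $(B_n)$ is available for every $n$. The case $n=1$ also needs care: $(B_1)$ holds unconditionally, and the degree bound above is valid for $n=1$ as well.

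For the ``in particular'', I would invoke the criterion of \cite{Yoshikawa25-cri}. There, perfectoid purity of $(A/f)^{\wedge p}$ follows when the splitting-order sequence is bounded away from $p$: a uniform bound $s_n\le p-1$ suffices, and possibly one needs $\bs(f)\ne(p-1,p-1,\dots)$ eventually, which the bound $s_n\le\dim X<p-1$ guarantees whenever $\dim X\le p-2$. The threshold estimate should come from the formula expressing $\mathrm{ppt}$ via $\bs(f)$, roughly $\mathrm{ppt}\ge 1-\sup_n s_n/(p-1)$ or a $p$-adic series $\sum (p-1-s_n)/p^n$ type expression. Plugging in $s_n\le\dim X$ gives $\ge (p-1-\dim X)/(p-1)$.

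The main obstacle is this last translation: identifying exactly which statement in \cite{Yoshikawa25-cri} converts a uniform bound $s_n\le c$ into $\mathrm{ppt}\ge (p-1-c)/(p-1)$. I would first try the form $\mathrm{ppt}=\liminf$ or $\inf$ of $(p-1-s_n)/(p-1)$-weighted sums, and check that the constant sequence $s_n=c$ yields exactly $(p-1-c)/(p-1)$. The edge case $\dim X=p-1$ cannot occur, since $p>\dim X$; when $\dim X=p-1$ would give threshold $0$, the purity would then rest on $s_n\le p-1$ alone.
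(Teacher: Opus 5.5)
Your proposal is correct and follows the paper's proof essentially verbatim: if $s_n\ge N$, then $\deg(L_n)=(p^n-s_n-1)d<(p^n-N)d$, and together with $(B_n)$ and Lemma~\ref{lem:jacobian-degree-membership} this forces $L_n\in\m^{[p^n]}$, a contradiction; the threshold bound then comes from the formula $\mathrm{ppt}=\sum_{n\ge1}(p-1-s_n)/p^n$ of \cite{Yoshikawa25-cri}*{Theorem~A and Proposition~6.2}, which is one of the formulas you guessed. One small correction: $p>\dim X$ does allow $\dim X=p-1$ (for example, quartic K3 surfaces with $p=3$), but this case is harmless, because that criterion needs only $s_n\le p-1$ for all $n$ to give perfectoid purity, and the asserted bound then reads $\mathrm{ppt}\ge 0$, which is trivial.
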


\begin{proof}
We note that $\dim X=N-1$.
By Theorem~\ref{thm:sn-le-p-minus-1}(2), the assumptions $Q=d$ and $p>\dim X$ imply
\[
s_n\le p-1
\qquad\text{for every }n\ge 1.
\]
We may assume $1\le s_n\le p-1$.
Set
\[
L_n:=f(s_1,\dots,s_{n-1},s_n+1).
\]
By $(B_n)$ in the proof of Theorem~\ref{thm:sn-le-p-minus-1}, we have
\[
L_n(f,J(f))\subseteq \m^{[p^n]}.
\]

If $s_n\ge N$, then
\[
\deg(L_n)=(p^n-s_n-1)d<(p^n-N)d=p^nQ+Q-(N+1)d
\]
by \eqref{eq:deg-L} and $Q=d$.
Hence Lemma~\ref{lem:jacobian-degree-membership} yields
\[
L_n=f(s_1,\dots,s_{n-1},s_n+1)\in \m^{[p^n]}.
\]
This contradicts the maximality of $s_n$.

Therefore $s_n\ge N$ is impossible, and hence
\[
s_n\le N-1=\dim X
\qquad\text{for every }n\ge 1.
\]

By \cite{Yoshikawa25-cri}*{Theorem~A and Proposition~6.2}, the ring $A/f$ is perfectoid pure and
\[
\mathrm{ppt}((A/(f))^{\wedge p},p)
=
\sum_{n=1}^{\infty}\frac{p-s_n-1}{p^n}
\ge
\sum_{n=1}^{\infty}\frac{p-1-\dim X}{p^n}
=
\frac{p-1-\dim X}{p-1},
\]
as desired.
\end{proof}

\begin{corollary}\label{cor:eventual-vanishing-fano}
Assume that $J(\bar f)\subset \bar A$ is $\bar{\m}$-primary.
If $Q>d$ and $(N_2)$ holds, then
\[
s_n=0
\qquad\text{for all }n\gg0.
\]
In particular, the localization $(A/f)_\m$ is perfectoid BCM-regular.
\end{corollary}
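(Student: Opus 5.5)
We first apply Theorem~\ref{thm:sn-le-p-minus-1}(2). The hypotheses $Q>d\ge 0$ and $(N_2)$ give $s_n\le p-1$ for every $n\ge 1$. So for every $n$ with $s_n\ge 1$, statement $(B_n)$ from that proof applies: with $L_n=f(s_1,\dots,s_{n-1},s_n+1)$,
\[
L_n(f,J(f))\subseteq \m^{[p^n]},
\qquad
\deg(L_n)=(p^n-s_n-1)d
\]
by \eqref{eq:deg-L}. We then follow the argument of Corollary~\ref{cor:CY-all-sn-bound}. Lemma~\ref{lem:jacobian-degree-membership} yields $L_n\in \m^{[p^n]}$, contradicting the maximality of $s_n$, as soon as
\[
(p^n-s_n-1)d<p^nQ+Q-(N+1)d .
\]
Because $Q>d$, it is enough to have $p^n d\le p^nQ+Q-(N+1)d$, which is $p^n(Q-d)\ge (N+1)d-Q$. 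This holds for all $n\gg0$, since $p^n(Q-d)\to\infty$. For such $n$ the case $1\le s_n\le p-1$ is impossible, so $s_n=0$. The bound $s_n\le p-1$ rules out $s_n=p$. This gives the first assertion.

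For the second assertion we invoke the criterion of \cite{Yoshikawa25-cri} relating the splitting-order sequence to singularities. By the formula used in Corollary~\ref{cor:CY-all-sn-bound}, perfectoid purity holds and the perfectoid purity threshold equals $\sum_{n\ge1}(p-s_n-1)/p^n$. Because $s_n=0$ for $n\gg0$, the sequence is eventually zero. The main step is to quote the correct statement in \cite{Yoshikawa25-cri}: the one saying that eventual vanishing of $\bs(f)$, that is, a threshold exceeding the "pure" value or the tail condition $s_n=0$ for $n\gg0$, characterizes perfectoid BCM-regularity of $(A/f)_\m$.

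If that characterization is phrased through $\mathrm{ppt}$, we would argue as follows. Eventual vanishing means that $(A/f)_\m$ together with $\epsilon\,\mathrm{div}(p)$ stays perfectoid pure for some $\epsilon>0$. Combined with the fact that $(A/f)[1/p]$ is regular, because $J(\bar f)$ is $\bar\m$-primary and the generic fibre is then smooth near the cone point after localizing, this gives BCM-regularity. We expect the citation to handle this directly. The only delicate point is checking that the hypotheses of that theorem (completeness, complete intersection, unramified base) are met after $p$-adic completion and localization at $\m$.
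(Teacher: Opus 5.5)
Your argument for the first assertion is correct and is the paper's argument. Theorem~\ref{thm:sn-le-p-minus-1}(2) gives $s_n\le p-1$. For large $n$, the statement $(B_n)$, the degree formula \eqref{eq:deg-L} and Lemma~\ref{lem:jacobian-degree-membership} then rule out $1\le s_n\le p-1$. The paper compares $\deg(L_n)$ with $d(p^n-2)$ and you compare it with $p^nd$; both work since $d>0$ and $Q>d$.

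For the second assertion the paper also just cites \cite{Yoshikawa25-cri}*{Theorem~6.4}. The input it uses is that the special fibre $\overline{A/f}=\bar A/(\bar f)$ has an isolated singularity at $\bar{\m}$. This follows from $J(\bar f)$ being $\bar{\m}$-primary by the Jacobian criterion. Together with the eventual vanishing of $\bs(f)$, it gives perfectoid BCM-regularity of $(A/f)_\m$. You do not identify this hypothesis; the hypotheses you list (completeness, complete intersection, unramified base) miss it. Your fallback argument also fails, for two reasons.
\begin{enumerate}
\item $(A/f)_\m[1/p]$ is not regular. The prime $(x_0,\dots,x_N)$ is contained in $\m$ and does not contain $p$. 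So the vertex of the cone $\Spec K[x_0,\dots,x_N]/(f)$ over the generic fibre, with $K=\mathrm{Frac}(C)$, survives the localization. That point is singular whenever $f\in(x_0,\dots,x_N)^2$, for example for $d\ge 2$ in ordinary projective space. Quasi-smoothness only gives regularity of the punctured cone.
\item Your argument proves too much. It only uses that $(A/f)_\m$ stays perfectoid pure after adding $\epsilon\,\mathrm{div}(p)$, i.e.\ $\mathrm{ppt}>0$. That also holds in the Calabi--Yau setting of Corollary~\ref{cor:CY-all-sn-bound} when $p>\dim X+1$. There the cone is log canonical but not klt: the exceptional divisor over the vertex has discrepancy $-1$. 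So it is not BCM-regular.
\end{enumerate}
What separates the Fano case is the eventual vanishing $s_n=0$ itself, fed into the isolated-singularity criterion, not the positivity of the threshold.
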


\begin{proof}
By Theorem~\ref{thm:sn-le-p-minus-1}(2), we have
\[
s_n\le p-1
\qquad\text{for every }n\ge 1.
\]
Since $Q>d$, for all sufficiently large $n$,
\[
d(p^n-2)<p^nQ+Q-(N+1)d.
\]
Fix such an $n$. If $s_n\ge 1$, then
\[
L_n(f,J(f))\subseteq \m^{[p^n]},
\qquad
L_n:=f(s_1,\dots,s_{n-1},s_n+1).
\]
by $(B_n)$ in the proof
of Theorem~\ref{thm:sn-le-p-minus-1}.
Moreover,
\[
\deg(L_n)=d(p^n-s_n-1)\le d(p^n-2)<p^nQ+Q-(N+1)d
\]
by \eqref{eq:deg-L}.
Hence Lemma~\ref{lem:jacobian-degree-membership} yields $L_n\in \m^{[p^n]}$.
This contradicts the maximality of $s_n$. Therefore $s_n=0$ for all sufficiently large
$n$.

The final assertion follows from \cite{Yoshikawa25-cri}*{Theorem~6.4} since $\var{A/f}$ has isolated singularity.
\end{proof}

\begin{theorem}\label{general-fano}
Assume that $(\bar f,J(\bar f))\subset \bar A$ is $\bar{\m}$-primary.
If $Q>d$ and $s_n\le p-1$ for every $n\ge 1$,
then
\[
s_n=0
\qquad\text{for all }n\gg0.
\]
In particular, the localization $(A/f)_\m$ is perfectoid BCM-regular.
\end{theorem}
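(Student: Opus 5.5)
The idea is to repeat the argument of Corollary~\ref{cor:eventual-vanishing-fano}, with two changes. The hypothesis $s_n\le p-1$ for all $n$ replaces the numerical condition $(N_2)$. The weighted Jacobian estimate of Lemma~\ref{lem:weighted-jacobian-containment} is replaced by a cruder, non-explicit degree bound for the ideal $(\bar f,J(\bar f))$.

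First, I will check that assertion $(B_n)$ in the proof of Theorem~\ref{thm:sn-le-p-minus-1} holds for every $n\ge1$ under the present hypotheses. That is, I want
\[
L_n(f,J(f))\subseteq \m^{[p^n]} \quad\text{whenever } 1\le s_n\le p-1 .
\]
Inspecting the proof of $(B_n)$ (including Claim~\ref{claim:fkdelta}), it uses only:
\begin{itemize}
\item[(i)] $(A_i)$ and $(B_i)$ for $i<n$;
\item[(ii)] Lemma~\ref{lem:delta-derivative};
\item[(iii)] the membership $L_{n-1}f\in\m^{[p^{n-1}]}$.
\end{itemize}
It never uses the degree bound \eqref{eq:degree-H} or the Jacobian hypothesis. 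Since $(A_i)$ is now assumed for all $i$, induction gives $(B_n)$ for all $n$. The degree formula \eqref{eq:deg-L}, $\deg L_n=(p^n-s_n-1)d$, also holds for all $n$, because every $s_n\le p-1$.

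Second, I will establish a replacement for Lemma~\ref{lem:jacobian-degree-membership}. Since $I:=(\bar f,J(\bar f))$ is homogeneous and $\bar\m$-primary, $\bar A/I$ has finite length. Hence there is an integer $D$ with $\bar A_{\ge D}\subseteq I$; its exact value is irrelevant. Now let $H$ be homogeneous with $H(f,J(f))\subseteq\m^{[p^n]}$. Then
\[
\bar H\in(\m^{[p^n]}\bar A:I)\subseteq(\m^{[p^n]}\bar A:\bar A_{\ge D}),
\]
and Lemma~\ref{lem:weighted-colon-estimate} places this in $\m^{[p^n]}\bar A+\bar A_{\ge p^nQ-Q-D+1}$. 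So if $\deg H<p^nQ-Q-D+1$, then $H\in\m^{[p^n]}$.

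Third, the conclusion follows. Since $Q>d$, for $n\gg0$ we have
\[
(p^n-2)d<p^nQ-Q-D+1 .
\]
If $s_n\ge1$ for such an $n$, then $L_n(f,J(f))\subseteq\m^{[p^n]}$ by the first step. Also $\deg L_n\le(p^n-2)d$. The second step then gives
\[
L_n=f(s_1,\dots,s_{n-1},s_n+1)\in\m^{[p^n]},
\]
contradicting the maximality of $s_n$. Hence $s_n=0$ for $n\gg0$.

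For the final assertion, $\bar\m$-primarity of $(\bar f,J(\bar f))$ means $\bar A/\bar f$ has an isolated singularity at $\bar\m$. Then \cite{Yoshikawa25-cri}*{Theorem~6.4} applies exactly as in Corollary~\ref{cor:eventual-vanishing-fano}. The only point needing care is the first step: verifying that nothing in the proof of $(B_n)$ secretly relies on $(N_n)$ or on $J(\bar f)$ alone being $\bar\m$-primary. I expect this to be a routine re-reading rather than a real obstacle.
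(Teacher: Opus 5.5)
Your proposal is correct and follows the paper's proof essentially step for step. You choose $D$ with $\bar A_{\ge D}\subseteq(\bar f,J(\bar f))$, combine Lemma~\ref{lem:weighted-colon-estimate} with the reduction-mod-$p$ argument of Lemma~\ref{lem:jacobian-degree-membership}, invoke $(B_n)$ from the proof of Theorem~\ref{thm:sn-le-p-minus-1}, and use $Q>d$ to get the degree contradiction for $n\gg0$. The paper only says ``the proof of Theorem~\ref{thm:sn-le-p-minus-1} yields''; your explicit check that $(B_n)$ needs only $s_i\le p-1$, $(B_{n-1})$ and Lemma~\ref{lem:delta-derivative}, never \eqref{eq:degree-H} or $(N_n)$, is exactly what that phrase tacitly relies on, and it is right.
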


\begin{proof}
Since $(\bar f,J(\bar f))\subset \bar A$ is $\bar{\m}$-primary, there exists a positive integer $k$ such that
\[
\bar A_{\ge k}\subseteq (\bar f,J(\bar f)).
\]
By Lemma~\ref{lem:weighted-colon-estimate} and the proof of Lemma~\ref{lem:jacobian-degree-membership}, if a homogeneous element $H\in A$ satisfies
\[
H(f,J(f))\subseteq \m^{[p^e]}
\qquad\text{and}\qquad
\deg(H)<p^eQ-Q-k+1,
\]
then
\[
H\in \m^{[p^e]}.
\]

Since $Q>d$, we have
\[
d(p^n-2)<p^nQ-Q-k+1
\]
for all sufficiently large $n$.
Fix such an $n$. We prove that $s_n=0$.

Suppose, for contradiction, that $s_n\ge 1$. Since $s_n\le p-1$ by assumption, the proof of Theorem~\ref{thm:sn-le-p-minus-1} yields
\[
f(s_1,\ldots,s_{n-1},s_n+1)\,(f,J(f))\subseteq \m^{[p^n]}.
\]
Moreover,
\[
\deg\bigl(f(s_1,\ldots,s_{n-1},s_n+1)\bigr)
=
d(p^n-s_n-1)
\le d(p^n-2)
<
p^nQ-Q-k+1,
\]
where the first equality follows from \eqref{eq:deg-L}. Hence, by the above criterion, we obtain
\[
f(s_1,\ldots,s_{n-1},s_n+1)\in \m^{[p^n]}.
\]
This contradicts the definition of $s_n$.
Therefore $s_n=0$ for all sufficiently large $n$.

The final assertion follows from \cite{Yoshikawa25-cri}*{Theorem~6.4} since $\var{A/f}$ has isolated singularity.
\end{proof}

\section{Applications to Calabi–Yau and Fano hypersurfaces}
In this section, we apply the numerical results of Section~2 to Calabi--Yau and Fano hypersurfaces. 
We first discuss hypersurface lifts, and then prove the main applications to perfectoid splitting and global $+$-regularity.

\subsection{Hypersurface lifts}
We first prove a lifting result for smooth weighted hypersurfaces, which shows that under the assumptions needed later, every unramified lift is again a weighted hypersurface.

\begin{proposition}\label{prop:hypersurface-lift}
Let $C$ be a complete unramified discrete valuation ring with maximal ideal $(p)$ and $C/pC=k$, and let
\[
X=\Proj\bigl(k[x_0,\dots,x_N]/(f)\bigr)\subset \mathbb P_k(q_0,\dots,q_N)
\]
be a smooth weighted hypersurface. Assume that one of the following holds:
\begin{enumerate}
    \item $X$ is Fano;
    \item $X$ is Calabi--Yau and $\dim X\neq 2$.
\end{enumerate}
Then every unramified lift $\widetilde X$ of $X$ over $C$ is isomorphic to a weighted
hypersurface
\[
\widetilde X \cong \Proj\bigl(C[x_0,\dots,x_N]/(\widetilde f)\bigr)
\subset \mathbb P_C(q_0,\dots,q_N)
\]
for some homogeneous lift $\widetilde f$ of $f$.
\end{proposition}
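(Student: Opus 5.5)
The plan is to lift the ample line bundle $\mathcal O_X(1)$ together with enough of its sections, and then to recover $\widetilde X$ as a hypersurface from the lifted section ring. Since $X$ is smooth and well-formed, $\mathcal O_X(m)$ is a line bundle for suitable $m$, and more generally each $\mathcal O_X(m)$ is a reflexive rank one sheaf. The section ring $\bigoplus_m H^0(X,\mathcal O_X(m))$ equals $k[x_0,\dots,x_N]/(f)$ when $\dim X\ge 1$, by the standard cohomology of weighted hypersurfaces.

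The first step is to lift $\mathcal O_X(1)$, or at least a Cartier multiple together with the divisorial sheaves. The obstruction to deforming a line bundle along $C/p^{n+1}\to C/p^n$ lies in $H^2(X,\mathcal O_X)$. For a weighted hypersurface, $H^i(X,\mathcal O_X(m))=0$ for $0<i<\dim X$ and all $m$, via the exact sequence $0\to \cO_{\mathbb P}(m-d)\to\cO_{\mathbb P}(m)\to\cO_X(m)\to 0$ and the cohomology of weighted projective space. Hence $H^2(X,\mathcal O_X)=0$ whenever $\dim X\ge 3$, and the line bundle lifts through every infinitesimal stage; Grothendieck existence then algebraizes, since $\widetilde X$ is proper over $C$. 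In the remaining cases I would argue differently. In the Fano case, $H^2(X,\cO_X)=0$ still holds for surfaces by Kodaira-type vanishing, which is available because $H^2(X,\cO_X)\cong H^0(X,\omega_X)^\vee$ and $\omega_X=\cO_X(d-Q)$ with $d<Q$ has no sections. When $\dim X\le 1$, $H^2$ vanishes trivially. For a Calabi--Yau surface the obstruction space $H^2(X,\cO_X)=k$ is nonzero, which is why $\dim X=2$ is excluded. Alternatively, I can take $\widetilde{\mathcal O}(1)$ to be the lift of $\omega_X^{-1}$ in the Fano case, using $\omega_{\widetilde X/C}$, when $Q-d=1$; in general, though, I would rely on the obstruction argument.

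The second step is to lift the sections. Using $H^1(X,\cO_X(m))=0$ for all $m$ (when $\dim X\ge2$) together with cohomology and base change, each $H^0(\widetilde X,\widetilde{\cO}(m))$ is a free $C$-module reducing onto $H^0(X,\cO_X(m))$. Choose homogeneous lifts $\tilde x_i\in H^0(\widetilde X,\widetilde\cO(q_i))$ of the coordinates. The induced graded map $C[x_0,\dots,x_N]\to \widetilde R:=\bigoplus_m H^0(\widetilde X,\widetilde\cO(m))$ is surjective modulo $p$, so it is surjective in each degree by Nakayama, since each graded piece is finite free. Its kernel $I$ is then a graded ideal with $I\otimes k\to (f)$ surjective, because $\widetilde R$ is $C$-flat and hence the Tor term vanishes. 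Lift $f$ to some $\widetilde f\in I_d$. Then $(\widetilde f)\subseteq I$ with equality modulo $p$, and since $A/(\widetilde f)$ is $C$-flat, Nakayama applied degreewise gives $I=(\widetilde f)$. Finally, $\widetilde X\cong \Proj \widetilde R$, because $\widetilde\cO(1)$ is relatively ample (ampleness lifts from the special fibre). This yields the desired isomorphism.

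The main obstacle is the first step in the weighted setting, where $\mathcal O_X(1)$ need not be invertible. To handle it, I would lift the invertible sheaf $\cO_X(\ell)$ with $\ell=\mathrm{lcm}(q_i)$, together with the reflexive sheaves $\cO_X(m)$ as divisorial sheaves, using that $X$ is smooth so that each $\cO_X(m)$ is actually invertible on $X$; this is what well-formedness plus smoothness provides. The curve case must be checked separately: there $H^1(\cO_X(m))$ can be nonzero. In this case I would use that Fano or Calabi--Yau curves are conics or elliptic curves and verify the claim directly.
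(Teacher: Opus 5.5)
Your proposal is correct and follows essentially the same route as the paper: use $H^2(X,\mathcal O_X)=0$ to lift the line bundle $\mathcal O_X(1)$ (invertible because $X$ is smooth), use $H^1(X,\mathcal O_X(m))=0$ and cohomology and base change to make the section ring a flat lift of $k[x_0,\dots,x_N]/(f)$, and conclude that it equals $C[x_0,\dots,x_N]/(\widetilde f)$ by graded Nakayama. The differences are minor. For Fano surfaces you use Serre duality, $H^2(X,\mathcal O_X)^\vee\cong H^0(X,\mathcal O_X(d-Q))=0$, whereas the paper checks $H^3(\mathbb P,\mathcal O_{\mathbb P}(-d))=0$ via its monomial basis. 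For curves no separate case check is needed: $H^1(X,\mathcal O_X(m))=0$ for $m\ge 1$ because the degree is positive and the genus is at most $1$, and the degree-$0$ piece lifts via the unit section.
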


\begin{proof}
Set
\[
P:=\mathbb P_k(q_0,\dots,q_N),
\qquad
Q:=q_0+\cdots+q_N.
\]
Since $X\subset P$ is a hypersurface of degree $d$, we have the exact sequence
\[
0\to \mathcal O_{P}(-d)\to \mathcal O_{P}\to \mathcal O_X\to 0.
\]
First, we prove $H^2(X,\cO_X)=0$.
By \cite{Shin-weighted-Brauer}*{Lemma~2.9(3)}, one has
\[
H^i(\mathbb P,\mathcal O_{\mathbb P}(m))=0
\qquad\text{for every }m\in \mathbb Z\text{ and every }0<i<N.
\]
Taking cohomology, if $\dim X \neq 2$, then
\[
H^2(X,\mathcal O_X)=0.
\]
On the other hand, if $\dim X=2$ and $d<Q$, then \cite{Shin-weighted-Brauer}*{Lemma~2.9(2)} yields that
$H^3(\mathbb P,\mathcal O_{\mathbb P}(-d))$ has a basis consisting of monomials
\[
x_0^{e_0}\cdots x_3^{e_3}
\qquad (e_i<0)
\]
of weighted degree $-d$. But every such monomial has degree at most
\[
-(q_0+\cdots+q_3)=-Q,
\]
whereas $-d>-Q$ because $d<Q$. Therefore
\[
H^3(\mathbb P,\mathcal O_{\mathbb P}(-d))=0.
\]
Thus again
\[
H^2(X,\mathcal O_X)=0.
\]

Since $X$ is smooth, the reflexive sheaf $\mathcal O_{\mathbb P}(1)|_X$ is a line bundle,
which we denote by $\mathcal O_X(1)$. 
The obstruction to lifting $\mathcal O_X(1)$ to
$\widetilde X$ lies in $H^2(X,\mathcal O_X)$, so $\mathcal O_X(1)$ lifts to a line bundle
$\widetilde L$ on $\widetilde X$.

Next we prove that the natural map
\[
H^0(\wt{X},\wt{L}^{\otimes m}) \to H^0(X,\cO_X(m))
\]
is surjective for $m\ge 0$.
If $\dim X \geq 2$, from
\[
0\to \mathcal O_{\mathbb P}(m-d)\to \mathcal O_{\mathbb P}(m)\to \mathcal O_X(m)\to 0
\]
we obtain $H^1(X,\cO_X(m))=0$ by \cite{Shin-weighted-Brauer}*{Lemma~2.9(3)}.
Next, if $\dim X=1$, then $H^1(X,\cO_X(m))=0$ for $m \geq 1$ by Kodaira-type vanishing.
Furthermore, since $H^0(\wt{X},\cO_{\wt{X}})$ is a $C$-algebra and $H^0(X,\cO_X)=k$, we obtain the desired surjection.

Now set
\[
R:=\bigoplus_{m\ge 0}H^0(\widetilde X,\widetilde L^{\otimes m}).
\]
Since $\widetilde L$ is ample, we have $\widetilde X\cong \Proj(R)$.
By cohomology and base change, each graded piece of $R$ is a finite free $C$-module and
\[
R/pR \cong \bigoplus_{m\ge 0}H^0(X,\mathcal O_X(m))
\cong k[x_0,\dots,x_N]/(f).
\]

Taking lifts, we can define a graded $C$-algebra homomorphism
\[
\varphi\colon C[x_0,\dots,x_N]\to R.
\]
Modulo $p$, this becomes the natural surjection
\[
k[x_0,\dots,x_N]\twoheadrightarrow k[x_0,\dots,x_N]/(f),
\]
so $\varphi$ is surjective by graded Nakayama.

Let $I:=\ker(\varphi)$. Since both $C[x_0,\dots,x_N]$ and $R$ are flat over $C$, the ideal
$I$ is flat over $C$. Reducing modulo $p$, we get $I/pI=(f)$.
Choose a homogeneous lift $\widetilde f\in I$ of $f$. 
Then $I=(\widetilde f)+pI$,
hence graded Nakayama implies
\[
I=(\widetilde f).
\]
Therefore
\[
R\cong C[x_0,\dots,x_N]/(\widetilde f),
\]
and so
\[
\widetilde X\cong \Proj(R)\cong
\Proj\bigl(C[x_0,\dots,x_N]/(\widetilde f)\bigr)
\subset \P_C(q_0,\dots,q_N).
\qedhere
\]
\end{proof}

\subsection{Calabi--Yau hypersurfaces}
We now apply the criterion of Section~2 to Calabi--Yau hypersurfaces. 
This yields perfectoid splitting for smooth unramified lifts in residue characteristic larger than the dimension, together with a corresponding local statement for families.

\begin{theorem}\label{CY}
Let $C$ be a complete discrete valuation ring with maximal ideal $(p)$ and $k:=C/(p)$.
Let $\wt{X}$ be a Calabi--Yau hypersurface in a well-formed weighted projective space over $C$.
We assume $X:=\wt{X} \times_{\Spec C} \Spec k$ is quasi-smooth in $\P_k(q_0,\ldots,q_N)$.
If $p$ is larger than the relative dimension of $\wt{X}$ and $p \nmid (q_0+\cdots +q_N)$, then $\wt{X}$ is perfectoid split.  
In particular, every unramified lift of a smooth Calabi--Yau hypersurface $X$ is perfectoid split if $p>\dim X$, $\dim X \neq 2$, and $p \nmid (q_0+\cdots +q_N)$ .
\end{theorem}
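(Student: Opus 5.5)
We reduce the global statement to the ring-theoretic one settled in Corollary~\ref{cor:CY-all-sn-bound}. Write $\wt X=\Proj\bigl(C[x_0,\dots,x_N]/(\wt f)\bigr)\subset \P_C(q_0,\dots,q_N)$ with $\wt f$ homogeneous of degree $d$. The Calabi--Yau condition means $\omega_{\wt X}\cong \cO_{\wt X}(d-Q)$ is trivial, so $d=Q$. Set $A=C[x_0,\dots,x_N]$ and $R=A/(\wt f)$, the section ring of $\wt X$ with respect to $\cO(1)$.

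By the local-global correspondence of \cite{IY26}, $\wt X$ is perfectoid split as soon as the $p$-adically completed graded ring $R^{\wedge p}$, or its localization at $\m$, is perfectoid pure. The hypothesis that the section ring is the full coordinate ring $A/(\wt f)$ is exactly the setting of \cite{IY26}, and well-formedness ensures the identification of $\cO_{\wt X}(m)$ with graded pieces works in codimension one. So it suffices to show that $(A/\wt f)^{\wedge p}$ is perfectoid pure.

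To apply Corollary~\ref{cor:CY-all-sn-bound} we need $J(\bar f)$ to be $\bar\m$-primary. Quasi-smoothness of $X$ gives only that $(\bar f,J(\bar f))$ is $\bar\m$-primary. By the weighted Euler identity $d\bar f=\sum_i q_ix_i\bar f_i\in J(\bar f)$, and $p\nmid d=Q$ makes $d$ a unit in $k$. Hence $\bar f\in J(\bar f)$, so $J(\bar f)$ is $\bar\m$-primary. Since $Q=d$ and $p>N-1=\dim X$, Corollary~\ref{cor:CY-all-sn-bound} (via Theorem~\ref{thm:sn-le-p-minus-1}(3)) gives $s_n\le \dim X\le p-1$ for all $n$, and therefore perfectoid purity of $(A/\wt f)^{\wedge p}$. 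Here the relative dimension of $\wt X$ equals $\dim X=N-1$.

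For the ``in particular'' statement, let $X$ be a smooth Calabi--Yau hypersurface with $\dim X\ne 2$. Proposition~\ref{prop:hypersurface-lift}(2) shows that every unramified lift is a weighted hypersurface $\Proj(A/(\wt f))$ lifting $f$. Smoothness implies quasi-smoothness, and the lift is again Calabi--Yau since $d=Q$ is unchanged. The main statement then applies.

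The step I expect to be most delicate is the local-global passage: checking that the criterion of \cite{IY26} applies to weighted (not necessarily standard) projective hypersurfaces. Concretely, one must verify that $\wt X\cong\Proj R$ with $R$ normal and the relevant $\cO(1)$ ample, possibly as a $\mathbb Q$-Cartier divisor. I would first try to apply the criterion of \cite{IY26} directly to the graded ring $R$, using well-formedness to control the codimension-one behavior.
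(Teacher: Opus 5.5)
Your proposal is correct and follows the paper's proof. The paper likewise obtains $\bar\m$-primarity of $J(\bar f)$ from quasi-smoothness and $p\nmid Q=d$, applies Corollary~\ref{cor:CY-all-sn-bound}, passes to perfectoid splitting via \cite{IY26}*{Proposition~5.3}, and invokes Proposition~\ref{prop:hypersurface-lift} for the ``in particular'' clause; the local-global step you flag as delicate is handled there simply by citing \cite{IY26}*{Proposition~5.3}.
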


\begin{proof}
It follows from Corollary~\ref{cor:CY-all-sn-bound}, \cite{IY26}*{Proposition~5.3}, and Proposition~\ref{prop:hypersurface-lift}.
We note that since $p \nmid (q_0+\cdots +q_N)$, the quasi-smoothness condition implies 
\[
(\frac{\partial f}{\partial x_0},\ldots ,\frac{\partial f}{\partial x_N})
\]
is $(x_0,\ldots,x_N)$-primary, where $f$ is the defining equation of $X$.
\end{proof}

\begin{corollary}\label{weak-ordinary}
Let $S$ be a scheme which is smooth and dominant over $\Spec \mathbb{Z}$, and let $Y_S \to S$ be a smooth proper morphism whose fibers are Calabi--Yau varieties of dimension $n$.
We assume $Y_S$ is a hypersurface in a projective space over $S$.
Let $\fp \in S$ such that $\kappa(\fp)$ is a field of positive characteristic.
Then the localization
\[
Y_{S,\mathfrak p}:=Y_S\times_S \Spec \mathcal{O}_{S,\mathfrak p}
\]
is perfectoid split if $\mathrm{char}(\kappa(\fp))>n$ and $\mathrm{char}(\kappa(\fp)) \neq n+2$.
\end{corollary}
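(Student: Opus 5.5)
The plan is to reduce Corollary~\ref{weak-ordinary} to Theorem~\ref{CY} by base change to the completion. Put $p:=\mathrm{char}(\kappa(\fp))$.

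\emph{Step 1: pass to a complete unramified DVR.} Since $S$ is smooth over $\Spec\mathbb{Z}$, the local ring $\cO_{S,\fp}$ is regular and $p$ is part of a regular system of parameters. I will localize $\cO_{S,\fp}$ at the height-one prime $(p)$ when needed, or better, choose a flat local map $\cO_{S,\fp}\to C$ with $C$ a complete unramified DVR. For example, take $C$ to be a Cohen ring of the residue field of the generic point of $V(p)$ through $\fp$. I would rather not need this change of base, so the key point is to relate perfectoid splitting of $Y_{S,\fp}$ to that of its base change. The cleanest way is the local--global correspondence of \cite{IY26}*{Proposition~5.3}: perfectoid splitting of $Y_{S,\fp}$ is equivalent to perfectoid purity of the $p$-completed section ring $R=\bigoplus_m H^0(Y_{S,\fp},\cO(m))$. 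Since $Y_S\subset \P^{n+1}_S$ is a hypersurface, $R\cong \cO_{S,\fp}[x_0,\dots,x_{n+1}]/(F)$ with $\deg F=n+2$, the Calabi--Yau degree. It then suffices to show perfectoid purity of $R^{\wedge p}$.

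\emph{Step 2: the numerical input.} Here $q_i=1$, $N=n+1$ and $Q=d=n+2$. The condition $p\nmid Q$ is exactly $p\neq n+2$, because $p>n$ and $p\mid n+2$ force $p=n+2$ (when $p=2$ this would give $n=0$, which is excluded by $p>n$ unless $n=1$, and that case is consistent). Smoothness of the fiber over $\fp$ gives quasi-smoothness, and with $p\nmid d$ the Euler relation $d\bar f=\sum x_i\bar f_i$ shows that $J(\bar f)$ is $\bar\m$-primary, as in the proof of Theorem~\ref{CY}. Also $\dim=n<p$. When the base is a complete DVR $C$, Corollary~\ref{cor:CY-all-sn-bound} gives $s_n\le \dim X$ for all $n$, and hence perfectoid purity. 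Over $\cO_{S,\fp}$, which has dimension greater than one, I would use that $S$ is smooth over $\mathbb{Z}$: the completion $\widehat{\cO_{S,\fp}}$ is a power series ring $W(k)[[t_1,\dots,t_r]]$ with a Frobenius lift $t_i\mapsto t_i^p$. The splitting-order argument of Section~2 only uses the Frobenius lift $\phi$, derivatives in the $x_i$, and the fiber ideal $\m^{[p^e]}$, now with the $t_i^{p^e}$ included. The computations then run verbatim with $\bar A=k[t][x]$ and the grading only in $x$.

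\emph{Main obstacle.} The hard step is making Step 2 rigorous over a higher-dimensional regular base, or avoiding it. My first attempt would avoid it: by \cite{IY26}, perfectoid splitting should descend along faithfully flat maps and should be checkable after $p$-completion and after completion at $\fp$. I would then apply Theorem~\ref{CY} fiberwise over the complete DVR $C$ obtained by localizing $\widehat{\cO_{S,\fp}}$ at $(p)$ and completing, and combine this with the fact that the ind-splitting of $\cO\to\pi_*\cO$ is detected over such a base. If that descent is unavailable, I would fall back on the direct Frobenius-lift computation over $W(k)[[t]]$ described above.
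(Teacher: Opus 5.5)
\textbf{There is a genuine gap.} Your preferred route, the ``first attempt'', fails. Once $r=\dim\cO_{S,\fp}-1\ge 1$, there is no flat local homomorphism from $\cO_{S,\fp}$ to a DVR, because flat local maps cannot lower dimension.

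The map you actually use, $\cO_{S,\fp}\to\widehat{\cO_{S,\fp}}\to$ (completion of the localization at $(p)$), is flat but not faithfully flat. It inverts the remaining parameters $y_1,\dots,y_r$. Its closed fiber is therefore a field extension of the fiber of $Y_S$ over the generic point of $V(p)\subset\Spec\cO_{S,\fp}$, not $Y_\fp$. So Theorem~\ref{CY} over that DVR only gives purity of the section ring at primes lying over $(p)$, never at the irrelevant ideal $(\fp,x_0,\dots,x_{n+1})$.

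Purity and splitting do not descend along such generizations. Already in characteristic $p$, a family of elliptic curves can be ordinary at the generic point and supersingular at special points. Nothing in \cite{IY26} supplies such a descent.

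Your fallback over $\widehat{\cO_{S,\fp}}\cong C(k)[[t_1,\dots,t_r]]$ is closer to workable, but it does not run ``verbatim'':
\begin{itemize}
\item Lemma~\ref{lem:weighted-jacobian-containment} uses that $\bar A/J(\bar f)$ is Artinian Gorenstein, which is false over $k[[t]][x]$. You would have to deduce $\bar A_{\ge (N+1)d-2Q+1}\subseteq J(\bar f)$ from the closed fiber by Nakayama, applied to each $x$-graded piece as a finitely generated $k[[t]]$-module.
\item You would also have to check that the criterion of \cite{Yoshikawa25-cri} applies with the $t_j$ as extra degree-zero variables and $\m^{[p^e]}=(p,t_1^{p^e},\dots,t_r^{p^e},x_0^{p^e},\dots,x_{n+1}^{p^e})$.
\end{itemize}
Your proposal addresses neither point.

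The missing idea is to \emph{specialize} rather than generize, and then deform back. The paper proceeds as follows:
\begin{enumerate}
\item Take a regular system of parameters $p,y_1,\dots,y_r$ of $R=\cO_{S,\fp}$ and set $C=R/(y_1,\dots,y_r)$. This is a DVR with uniformizer $p$ and residue field $\kappa(\fp)$, so the closed fiber of $C[x_0,\dots,x_{n+1}]/(f)$ is exactly the smooth fiber $Y_\fp$.
\item Section~2 then applies as is (Theorem~\ref{thm:sn-le-p-minus-1}(3) and Corollary~\ref{cor:CY-all-sn-bound}, after $p$-completing $C$) and gives perfectoid purity of $C[x]/(f)$.
\item The deformation result \cite{p-pure}*{Theorem~6.6} lifts perfectoid purity from $R[x]/(f,y_1,\dots,y_r)$ back to $R[x]/(f)$ along the regular sequence $y_1,\dots,y_r$. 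The hypersurface rings are Gorenstein, which is what makes this deformation available, in analogy with Fedder's result for $F$-purity.
\item Finally, \cite{IY26}*{Proposition~5.3} turns this into perfectoid splitting of $Y_{S,\fp}$.
\end{enumerate}
This avoids redoing Section~2 over a higher-dimensional base altogether.

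Your Step~2 numerics are fine: $Q=d=n+2$; given $p>n$, $p\nmid n+2$ is equivalent to $p\neq n+2$; and the Euler relation makes $J(\bar f)$ $\bar\m$-primary.
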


\begin{proof}
We take a point $\fp \in S$ such that $\kappa(\fp)$ is of characteristic $p >n$ and $p \neq n+2$.
We set $R:=\cO_{S,\fp}$. Then it is a smooth and local $\Z$-algebra.
The maximal ideal of $R$ is denoted by $\m$.
Since $R$ is smooth over $\Z$ and has residue characteristic $p$, we can take a regular system of parameters $p,y_1,\ldots,y_r$.
By assumption, $Y_{S,\mathfrak p}$ is a hypersurface in $\P^{n+1}_R$ of degree $n+2$.
The defining equation is denoted by 
\[
f \in R[x_0,\ldots,x_{n+1}].
\]
Then $f$ is homogeneous of degree $n+2$.
Set $C:=R/(y_1,\ldots,y_r)$.
Since $C$ is a complete discrete valuation ring with maximal ideal $(p)$, by Theorem~\ref{thm:sn-le-p-minus-1} (3), the ring
\[
C[x_0,\ldots,x_{n+1}]/(f)
\]
is perfectoid pure since
\[
\Proj\bigl((C[x_0,\ldots,x_{n+1}]/(f))\otimes_C k\bigr)=Y_{\mathfrak p}
\]
is smooth.
By \cite{p-pure}*{Theorem~6.6} and \cite{IY26}*{Proposition~5.3}, the scheme $Y_{S,\mathfrak p}$ is perfectoid split, as desired.
\end{proof}

\begin{theorem}\label{p-purity-K3}
Let $X$ be a K3 surface over an algebraically closed field of characteristic $p>0$.
Assume that $X$ is a hypersurface in a projective space.
\begin{enumerate}
    \item If $p>2$ or the Artin--Mazur height of $X$ is finite, then every unramified lift as a hypersurface of $X$ is perfectoid split.
    \item If $p=2$ and $X$ is supersingular, then $X$ has unramified lifts $\wt{X}_1, \wt{X}_2$ such that $\wt{X}_1$ is perfectoid pure and $\wt{X}_2$ is not perfectoid pure.
\end{enumerate}
\end{theorem}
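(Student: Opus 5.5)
The plan is to treat the two parts separately. Part (1) is a direct application of the numerical criteria of Section~2, except in the case $p=2$ with finite height. Part (2) is an explicit construction exploiting how $\Delta$ depends on the choice of lift.

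Throughout, $X=\Proj(k[x_0,\dots,x_3]/(f))\subset\P^3_k$ is a smooth quartic, so $N=3$, $Q=d=4$ and $\dim X=2$. An unramified lift as a hypersurface is $\wt X=\Proj(C[x_0,\dots,x_3]/(\wt f))$ with $\wt f$ a homogeneous lift of $f$.

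\textbf{Part (1), case $p>2$.} Here $p$ is odd, so $p\nmid 4=Q$. By the Euler relation $4\bar f=\sum x_i\bar f_i$, smoothness of $X$ gives that $J(\bar f)$ is $\bar\m$-primary. Since $p>2=\dim X$, Corollary~\ref{cor:CY-all-sn-bound} applies to $\wt f$: it gives $s_n\le 2$ for all $n$, and hence perfectoid purity of $(C[x_0,\dots,x_3]/(\wt f))^{\wedge p}$. Then \cite{IY26}*{Proposition~5.3} converts this purity of the cone into perfectoid splitting of $\wt X$, exactly as in the proof of Theorem~\ref{CY}.

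\textbf{Part (1), case $p=2$ with finite height $h$.} Section~2 is unavailable here: $p\mid d$, so $J(\bar f)$ need not be primary, and the inequality $(N_2)$ fails. Instead I would use that for a K3 surface the Artin--Mazur height equals the quasi-$F$-split height. I would then apply the Fedder-type criterion for quasi-$F$-splitting of hypersurfaces, which is expressed through the same elements $f^{p-1}$, $\Delta(f)^{p-1}$, and their iterates as the splitting-order sequence. The aim is to show that quasi-$F$-splitting of height $h$ forces $s_n\le p-1$ for all $n$; for instance, $s_n\le p-1$ with $s_n=p-1$ only for $n<h$. Since $Q=d$, the Calabi--Yau case of \cite{Yoshikawa25-cri}*{Theorem~A} then gives perfectoid purity, and \cite{IY26}*{Proposition~5.3} again yields perfectoid splitting. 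The key point to check is that the quasi-$F$-split criterion is independent of the lift, because it only involves $\Delta(f)$ modulo $\m^{[p^n]}$ in a way compatible with the recursion defining $s_n$.

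\textbf{Part (2), $p=2$ and $X$ supersingular.} Since $X$ is not $F$-split, $f\in\m^{[2]}$, so $s_1=1$. The freedom is in the lift. For $\wt f' =\wt f+2g$ with $g$ homogeneous of degree $4$, a direct computation gives
\[
\Delta(\wt f+2g)=\Delta(\wt f)+2\wt f g+2g^2-\phi(g)\equiv \Delta(\wt f)+g^2 \pmod 2.
\]
So changing the lift changes $\Delta$ modulo $2$ by arbitrary squares of quartics. The subsequent $s_n$ are computed from $L_{n-1}^2\Delta^{s_{n-1}}f^{2-s}$ modulo $\m^{[2^n]}$.
\begin{itemize}
\item To obtain $\wt X_2$, I would choose $g$ so that some $f(s_1,\dots,s_{n-1},2)=L_{n-1}^2\Delta^{s_{n-1}}$ lies in $\m^{[2^n]}$, i.e.\ $s_n=p$. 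Already $n=2$ should suffice: this means $\Delta+g^2$ kills the relevant socle-type monomials of $f^{2}\cdot$ against $\m^{[4]}$, and then the criterion of \cite{Yoshikawa25-cri} shows the cone is not perfectoid pure.
\item To obtain $\wt X_1$, I would choose $g$ with the opposite behaviour, so that $s_n\le 1$ for all $n$; for example, arrange $s_n=0$ eventually, or a periodic pattern.
\end{itemize}

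The main obstacle is this last step. One must show that supersingularity, meaning that all quasi-$F$-split criteria fail, leaves exactly the right amount of freedom in the square class of $\Delta$ modulo the relevant Frobenius powers, uniformly for every supersingular quartic. I would first attempt it for a normal form of supersingular quartics in characteristic $2$, reducing to finitely many monomial coefficients, and cross-check with \cite{TY26-K3}.
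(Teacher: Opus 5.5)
Your Part~(1) for $p>2$ is the paper's argument (Theorem~\ref{CY}, i.e.\ Corollary~\ref{cor:CY-all-sn-bound} plus \cite{IY26}*{Proposition~5.3}). Part~(2), however, has a genuine gap, and your finite-height $p=2$ case is only partly argued.

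\textbf{Finite height, $p=2$.} The paper does not use splitting-order sequences here. It gets quasi-$F$-splitting from \cite{Yobuko}*{Theorem~4.5} and then invokes \cite{Yoshikawa25}*{Theorem~A}. Your alternative can be made to work, but the decisive step is missing and the pattern you guess is false.

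The Fedder-type criterion only controls $s_1,\dots,s_h$. It gives $s_1=\dots=s_{h-1}=1$ and $s_h=0$, provided you check that it may be computed with the $\Delta$ of an arbitrary lift. Here $s_i=2$ is excluded because a first value $s_i=2$ with $i\le h$ means $\Delta^{2^{i-1}-1}\in\m^{[2^i]}$, which propagates to $f\Delta^{2^{h-1}-1}\in\m^{[2^h]}$.

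What controls $n>h$ is periodicity. The element $L_h=f(s_1,\dots,s_{h-1},1)\notin\m^{[2^h]}$ has degree $4(2^h-1)$, the socle degree. Hence $L_h\equiv c\,(x_0x_1x_2x_3)^{2^h-1}$ modulo $\m^{[2^h]}$ with $c\in k^\times$. Since $s_h=0$, one has $f(s_1,\dots,s_h,t_1,\dots,t_j)=L_h^{2^j}f(t_1,\dots,t_j)$. Multiplication by $(x_0x_1x_2x_3)^{2^{h+j}-2^j}$ detects membership in $\m^{[2^j]}$, so $s_{h+j}=s_j$ for all $j$, and $s_n\le 1$ follows. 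In particular $s_{h+1}=1=p-1$ when $h\ge 2$, contrary to your ``$s_n=p-1$ only for $n<h$''.

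\textbf{Part~(2).} Neither lift is actually produced. The paper does not prove this part itself; it takes it from \cite{TY26-K3}. Two of your concrete suggestions also fail as stated.

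First, modulo $\m^{[4]}$ the square of a quartic form over $k$ is a multiple of $(x_0x_1x_2x_3)^2$. This is the only square monomial of degree $8$ with all exponents $\le 3$. So replacing $\wt f$ by $\wt f+2g$ changes $\Delta \bmod \m^{[4]}$ only in that single coefficient. Hence $s_2=2$ is attainable only if $\bar\Delta(\wt f)$ has no other monomial with all exponents $\le 3$. That is a condition on $f$ itself, which you have not derived from supersingularity. If it fails, you must find some $n$ and $g$ with $(\Delta+g^2)^{2^{n-1}-1}\in\m^{[2^n]}$.

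Second, for $\wt X_1$ you cannot arrange $s_n=0$ eventually. Supersingularity (infinite height, with the same lift-independence as above) gives $f\Delta^{2^{n-1}-1}\in\m^{[2^n]}$ for all $n$. So whenever $s_1=\dots=s_{n-1}=1$, one gets $s_n\ge 1$. A perfectoid pure lift must therefore have $s_n=1$ for every $n$, i.e.\ $(\Delta+g^2)^{2^{n-1}-1}\notin\m^{[2^n]}$ for all $n\ge 2$ simultaneously. That is an infinite family of conditions on a single $g$, and you give no argument for it.

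This existence statement, for both lifts and for every supersingular quartic, is exactly the missing input that the paper imports from \cite{TY26-K3}.
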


\begin{proof}
If $p>2$, then the result follows from Theorem~\ref{CY}.
If the Artin--Mazur height of $X$ is finite, then $X$ is quasi-$F$-split by \cite{Yobuko}*{Theorem~4.5}.
Thus, the result follows from \cite{Yoshikawa25}*{Theorem~A}.

Next, we assume $p=2$ and $X$ is supersingular.
Then the result follows from \cite{TY26-K3}.
\end{proof}

\subsection{Fano hypersurfaces}
Finally, we consider Fano hypersurfaces. Using the eventual vanishing result for the splitting-order sequence, we prove global $+$-regularity for unramified lifts and derive explicit consequences in threefold cases.

\begin{theorem}\label{thm:Fano}
Let $C$ be a complete discrete valuation ring with maximal ideal $(p)$ and $k:=C/(p)$.
Let $\wt{X}$ be a Fano hypersurface of degree $d$ in a well-formed weighted projective space
$\mathbb{P}_C(q_0,\ldots,q_N)$, and set
\[
Q:=q_0+\cdots+q_N.
\]
We assume $X:=\wt{X} \times_{\Spec C} \Spec k$ is quasi-smooth in $\P_k(q_0,\ldots,q_N)$.
If $p \nmid d$ and
\[
(Q-d)p^2+dp+Q-Nd>0,
\]
then $\wt{X}$ is globally $+$-regular.
In particular, if $p\ge \dim X$ and $p \nmid d$, then $\wt{X}$ is globally $+$-regular.
\end{theorem}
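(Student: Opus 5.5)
The plan is to reduce global $+$-regularity of $\wt{X}$ to a ring-theoretic statement about the section ring, and then to apply Corollary~\ref{cor:eventual-vanishing-fano}.

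Write $\wt{X}=\Proj\bigl(C[x_0,\dots,x_N]/(\wt f)\bigr)$ with $\wt f$ homogeneous of degree $d$, and set $A:=C[x_0,\dots,x_N]$ and $R:=A/(\wt f)$. Since $\wt X$ is Fano and the ambient weighted projective space is well-formed, $\mathcal O_{\wt X}(1)$ is an ample $\mathbb Q$-Cartier divisor with $-K_{\wt X}\sim_{\mathbb Q}(Q-d)\mathcal O(1)$ and $Q>d$. The local--global correspondence for global $+$-regularity will be invoked: a projective scheme is globally $+$-regular if and only if the localization at the irrelevant ideal of a section ring of an ample divisor is (perfectoid) BCM-regular. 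This is the $+$-regular analogue of \cite{IY26}*{Proposition~5.3}, as in \cite{BMPSTWW23,TY23}, and I take it as available. It therefore suffices to prove that $R_\m$ is perfectoid BCM-regular, with $\m=(p,x_0,\dots,x_N)$.

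First I need the Jacobian hypothesis of Section~2. Quasi-smoothness of $X$ means that $(\bar f,J(\bar f))$ is $\bar\m$-primary. Since $p\nmid d$, the weighted Euler identity $d\bar f=\sum q_ix_i\bar f_i$ gives $\bar f\in J(\bar f)$, so $J(\bar f)$ itself is $\bar\m$-primary.

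Next, $(N_2)$ reads $d(p^2-p-1)<p^2Q+Q-(N+1)d$. Rearranged, this is $(Q-d)p^2+dp+Q-Nd>0$, which is exactly the hypothesis. With $Q>d$, Corollary~\ref{cor:eventual-vanishing-fano} then gives $s_n=0$ for $n\gg0$, and hence perfectoid BCM-regularity of $(A/\wt f)_\m$, which finishes the main statement.

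For the ``in particular'' clause, assume $p\ge \dim X=N-1$. Since $Q-d\ge1$, the left side is at least $p^2+dp+Q-Nd$. I will check that $p^2+(p-N)d+Q>0$:
\begin{itemize}
\item If $p\ge N$, every term is nonnegative and $Q>0$, so it holds.
\item If $p=N-1$, the expression becomes $(N-1)^2-d+Q$, which is positive because $Q>d$.
\end{itemize}

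The main obstacle is the global-to-local step. One must check that the section ring used in the $+$-regularity criterion, typically $\bigoplus_m H^0(\mathcal O(m))$ for a $\mathbb Q$-Cartier ample Weil divisor on a weighted hypersurface, is really $A/(\wt f)$. One must also check that the criterion applies to non-Cartier $\mathcal O(1)$. Well-formedness, together with the cohomology vanishing used in Proposition~\ref{prop:hypersurface-lift}, should give both. If the criterion in the literature is only stated with a Veronese subring, I would pass to $\bigoplus_m R_{mr}$. This passage should be harmless, since perfectoid BCM-regularity descends along the split finite inclusion of a Veronese.
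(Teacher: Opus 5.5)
Your proposal is correct and follows the paper's proof. The paper likewise applies Corollary~\ref{cor:eventual-vanishing-fano} to get perfectoid BCM-regularity (hence $+$-regularity) of $A_\m$, and then passes to global $+$-regularity of $\wt X$ by citing \cite{Onuki}*{Lemma~2.2}, where you invoke the local--global correspondence; your Euler-identity check that $J(\bar f)$ is $\bar\m$-primary when $p\nmid d$, your rearrangement of $(N_2)$, and your case split for the ``in particular'' clause (equivalent to the paper's estimate $d(p-N)\ge -d$) supply details the paper leaves implicit or states more tersely.
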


\begin{proof}
Let $A=C[x_0,\ldots,x_N]/(f)$ be a coordinate ring of $\wt{X}$.
Since $\var{A}$ has an isolated singularity, by Corollary~\ref{cor:eventual-vanishing-fano}, the localization $A_\m$ is perfectoid BCM-regular, where $\m:=(p,x_0,\ldots,x_N)$, and in particular, $A_\m$ is $+$-regular.
By \cite{Onuki}*{Lemma~2.2}, the lift $\wt{X}$ is globally $+$-regular.

For the last assertion, it suffices to show that if $p\ge \dim X=N-1$, then the condition $(N_2)$ is satisfied.

Now $p\ge N-1$ implies
\[
dp-Nd=d(p-N)\ge -d.
\]
Hence
\begin{align*}
(Q-d)p^2+dp+Q-Nd
&\ge (Q-d)p^2+Q-d.
\end{align*}
Since $X$ is Fano, we have $Q>d$, so the right-hand side is strictly positive. Therefore
$(N_2)$ holds, and the result follows.
\end{proof}

\begin{corollary}\label{Fano-smooth}
Let $C$ be a complete discrete valuation ring with maximal ideal $(p)$ and $k:=C/(p)$.
Let $X$ be a smooth Fano hypersurface in a  weighted projective space
$\mathbb{P}_C(q_0,\ldots,q_N)$, and set
\[
Q:=q_0+\cdots+q_N.
\]
If $p \nmid d$ and
\[
(Q-d)p^2+dp+Q-Nd>0,
\]
then every unramified lift of $X$ is globally $+$-regular.
In particular, if $p\ge \dim X$ and $p \nmid d$, then every unramified lift of $X$ is globally $+$-regular.
\end{corollary}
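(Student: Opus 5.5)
The plan is to combine Proposition~\ref{prop:hypersurface-lift} with Theorem~\ref{thm:Fano}.

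\emph{Step 1: the lift is a hypersurface.} Let $\wt{X}$ be an unramified lift of $X$ over $C$. Since $X$ is a smooth Fano weighted hypersurface, case (1) of Proposition~\ref{prop:hypersurface-lift} applies. It gives
\[
\wt{X}\cong \Proj\bigl(C[x_0,\dots,x_N]/(\wt f)\bigr)\subset \P_C(q_0,\dots,q_N)
\]
for a homogeneous lift $\wt f$ of the defining equation $f$ of $X$. In particular, $\wt{X}$ is a hypersurface of degree $d$ in the same weighted projective space.

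\emph{Step 2: check the hypotheses of Theorem~\ref{thm:Fano}.} The special fibre of $\wt{X}$ is $X$, which is smooth, hence quasi-smooth in $\P_k(q_0,\dots,q_N)$. The Fano condition $Q>d$ is inherited because it depends only on the degrees. The numerical inequality $(Q-d)p^2+dp+Q-Nd>0$ and the condition $p\nmid d$ are the same hypotheses as in the statement. For well-formedness, I would note that the hypersurface structure in Proposition~\ref{prop:hypersurface-lift} is taken in the given ambient space. If that space is not well-formed, I would replace it by its standard well-formed model, which changes neither $X$ nor the relation $d<Q$ for smooth $X$. If the intended reading is that the ambient space is already well-formed, this step is vacuous.

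\emph{Step 3: conclude.} Theorem~\ref{thm:Fano} then shows that $\wt{X}$ is globally $+$-regular. For the final clause, the computation at the end of the proof of Theorem~\ref{thm:Fano} already shows that $p\ge \dim X=N-1$ together with $Q>d$ implies the numerical inequality. Since $p\nmid d$ is also assumed, the clause follows.

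\emph{Expected obstacle.} The main point needing care is Step 2, that is, confirming that the conclusion of Proposition~\ref{prop:hypersurface-lift} lands exactly in the setting of Theorem~\ref{thm:Fano}. This means the ambient space should be well-formed and the Jacobian ideal should be $\bar\m$-primary. Theorem~\ref{thm:Fano} uses primarity through quasi-smoothness together with $p\nmid d$, via the Euler relation
\[
d\,\bar f=\sum_i q_i x_i \bar f_i,
\]
and otherwise the argument is a direct combination of earlier results.
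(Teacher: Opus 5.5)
Your proposal is correct and matches the paper's argument. The paper's proof likewise combines Proposition~\ref{prop:hypersurface-lift}, which shows that every unramified lift is a weighted hypersurface $\Proj(C[x_0,\dots,x_N]/(\wt f))$, with Theorem~\ref{thm:Fano}, whose last clause covers $p\ge \dim X$. Your detour through a well-formed model is unnecessary, since the intended setting (see the introduction) already has a well-formed ambient space; if you kept it, you would also need to check that $p\nmid d$ and the numerical inequality still hold after $d$ and $Q$ change.
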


\begin{proof}
It follows from Theorem~\ref{thm:Fano} and Proposition~\ref{prop:hypersurface-lift}.
\end{proof}

\begin{proposition}\label{prop:smooth-fano-3fold-weights}
Let \(k\) be a perfect field of characteristic \(p>0\), and let
\[
X=X_d\subset \P(q_0,\dots,q_4)
\]
be a smooth Fano threefold hypersurface over \(k\).
Then the ambient weight system \((q_0,\dots,q_4)\), up to reordering, is one of the following:
\[
(1,1,1,1,1),\qquad
(1,1,1,1,2),\qquad
(1,1,1,1,3),\qquad
(1,1,1,2,3).
\]
Moreover, the corresponding possibilities for the degree are:
\[
\begin{array}{c|c}
(q_0,\dots,q_4) & d\\
\hline
(1,1,1,1,1) & 1,2,3,4,\\
(1,1,1,1,2) & 4,\\
(1,1,1,1,3) & 6,\\
(1,1,1,2,3) & 6.
\end{array}
\]
\end{proposition}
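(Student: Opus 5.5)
The plan is to classify smooth (not merely quasi-smooth) weighted hypersurfaces using well-formedness and the smoothness constraint, which forces the ambient singular locus to miss $X$. We may assume $\mathbb P(q_0,\dots,q_4)$ is well-formed, normalizing weights if necessary, and that $X$ is not a linear cone, i.e.\ $d\neq q_i$ for all $i$; if $d=q_i$ then $X\cong\mathbb P(q_0,\dots,\widehat{q_i},\dots,q_4)$, and such a space is smooth only when all remaining weights are $1$. That case is $(1,1,1,1,q)$ with $d=q$, which is isomorphic to $\mathbb P^3$ and already appears as $d=1$ in $\mathbb P^4$.

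The first step is the standard smoothness lemma. For any prime $\ell$, let $I_\ell=\{i: \ell\mid q_i\}$ and let $\Pi_\ell=\{x_j=0,\ j\notin I_\ell\}\cong\mathbb P(q_i)_{i\in I_\ell}$. Points of $\Pi_\ell$ have nontrivial stabilizer, and well-formedness makes $\mathbb P$ genuinely singular along $\Pi_\ell$ whenever $\Pi_\ell\neq\emptyset$. Hence smoothness of $X$ forces $X\cap\Pi_\ell=\emptyset$. Since $X\cap\Pi_\ell$ is cut out by one equation in a space of dimension $|I_\ell|-1$, this requires $|I_\ell|\le 1$. Moreover, if $I_\ell=\{i\}$, the point $P_i$ must not lie on $X$, so $f$ contains a pure power $x_i^{a}$; thus $q_i\mid d$. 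More generally, for any $m>1$ and $I_m=\{i: m\mid q_i\}$, the same argument gives $|I_m|\le 1$, and if $I_m=\{i\}$ then $q_i\mid d$. Combining these facts, the weights $q_i>1$ are pairwise coprime, each divides $d$, and so $\operatorname{lcm}\{q_i\}=\prod_{q_i>1}q_i$ divides $d$.

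The second step is to impose the Fano condition $d<Q$. Write $r$ for the number of weights equal to $1$ and $q'_1,\dots,q'_{5-r}$ for the pairwise coprime weights greater than $1$. Then $\prod q'_j\le d<r+\sum q'_j$, which is very restrictive. If $r=5$, we have $d\le 4$. If $r=4$, then $q\le d<4+q$ with $q\mid d$, so $d=q$, which is the excluded cone case, or $d=2q<4+q$, which gives $q\le 3$. This yields $(1,1,1,1,2)$ with $d=4$ and $(1,1,1,1,3)$ with $d=6$. If $r=3$, the coprime pair $a<b$ satisfies $ab\mid d<3+a+b$; since $ab\ge a+b+3$ once $a\ge 2,b\ge 5$ or $a\ge3,b\ge4$, only $(2,3)$ survives, and then $d=6<8$. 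If $r\le 2$, we need three pairwise coprime integers at least $2$ with product less than $2+\text{sum}$, and already $2\cdot3\cdot5=30$ exceeds $12$. For smaller $r$ the inequality fails even more strongly, because $\prod$ grows much faster than the sum.

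The main obstacle is rigorously justifying, in arbitrary characteristic $p$, that well-formedness implies $\mathbb P$ is singular along each $\Pi_m$. One would check that the local model at a point of $\Pi_m$ is $\mathbb A^{4-\dim\Pi_m}/\mu_{m'}$, acting with weights that generate no pseudo-reflections by well-formedness. Such a quotient is non-smooth, even when $p\mid m'$, by examining the invariant ring as a non-polynomial Veronese-type ring. A second point to handle carefully is the divisibility $q_i\mid d$: it uses that $f$ must contain a monomial $x_i^a$, or $x_i^a x_j$ in the quasi-smooth case. The latter would make $P_i\in X$ and hence produce a singular point of $\mathbb P$ lying on $X$, which contradicts smoothness.
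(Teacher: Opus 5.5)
Your second step, the numerics under $d<Q$, is correct once the first step is in place. The first step, however, has a genuine gap. From ``$\mathbb{P}$ is singular along $\Pi_\ell$'' you conclude ``a smooth $X$ misses $\Pi_\ell$'', and this inference is false in general. At a point with nontrivial stabilizer a weighted hypersurface is only a Weil divisor, and a Weil divisor can be smooth at a singular point of the ambient space.

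\emph{Linear cones.} In the well-formed space $\mathbb{P}(1,1,2,2,2)$, the hypersurface $\{x_0=0\}$ is $\mathbb{P}(1,2,2,2)\cong\mathbb{P}^3$. It is smooth and contains the singular plane $\{x_0=x_1=0\}$. This also refutes your claim that a linear cone is smooth only when the remaining weights are all $1$.

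\emph{Non-linear cones.} Excluding linear cones does not rescue the principle. Take $X_3=(x_0\ell_0+x_1\ell_1+c(x_0,x_1)=0)\subset\mathbb{P}(1,1,2,2,2)$, with $\ell_0,\ell_1$ linear in $x_2,x_3,x_4$ and $c$ a cubic. At any point of the singular plane where $(\ell_0,\ell_1)\neq(0,0)$, the local equation has a linear term in a transverse coordinate. There the germ of $X$ is $\mathbb{A}^2\times(\mathbb{A}^1/\mu_2)\cong\mathbb{A}^3$. Likewise $(x_0x_2+x_1^3=0)\subset\mathbb{P}(1,1,2)$ is a smooth curve through the vertex.

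You use the same invalid inference again to get $q_i\mid d$, when you argue that $P_i\in X$ would produce a singular point of $\mathbb{P}$ lying on $X$.

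What your reasoning does prove is the Cartier case. Let $P$ have nonzero coordinates indexed by $S_P$, and set $g_P=\gcd(q_i: i\in S_P)$. If $g_P\mid d$, then $f$ divided by a Laurent monomial of degree $d$ in these coordinates is a local equation $h$. Regularity of $\mathcal{O}_{\mathbb{P},P}/(h)$ then forces regularity of $\mathcal{O}_{\mathbb{P},P}$, which is the contradiction you want.

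When $g_P\nmid d$, you need a separate argument; in this case $f$ vanishes identically on the stratum $\{x_j=0,\ j\notin S_P\}$, and this includes $P_i$ whenever $q_i\nmid d$.
\begin{enumerate}
\item Locally $X=\{F=0\}/\mu_{g_P}$. Deleting the transverse weight $\equiv d \pmod{g_P}$ can produce pseudo-reflections, so local smoothness really can occur, as in the example above.
\item Along a two-dimensional stratum contained in $X$, the obstruction is therefore global. The two transverse partials restrict to weighted forms on that weighted projective plane, and they must have a common zero unless $X$ is a linear cone.
\item You must then also show that $X$ is singular at the resulting non-quasi-smooth points.
\end{enumerate}
None of this is in the proposal, and it is where the actual difficulty lies. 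By contrast, the point you flagged as the main obstacle is routine: $\mathbb{P}$ is toric, and the smoothness criterion for toric varieties does not depend on the characteristic.

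The paper avoids the whole issue differently. It quotes Tanaka's classification of smooth Fano threefolds and reads off which entries are weighted hypersurfaces.
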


\begin{proof}
We may assume $k$ is an algebraically closed field.
By the classification of smooth Fano threefolds in positive characteristic, a smooth Fano
threefold \(X\) with \(\rho(X)=1\) is one of the varieties listed in Table~14 of
\cite{Tanaka-pos-IV}. 
More precisely, Table~14 is obtained from
\cite{Tanaka-pos-I}*{Theorem~2.18 and Theorem~2.23},
\cite{Tanaka-pos-II}*{Theorem~1.1 and Proposition~2.8},
and the index-one non-very-ample case treated in \cite{Tanaka-pos-I}*{Theorem~1.1 and Proposition~6.8}.

Among the varieties appearing in Table~14, the ones that are smooth hypersurfaces in a
weighted projective \(4\)-space are exactly the following:
\[
\P^3,\qquad
Q^3\subset \P^4,\qquad
X_3\subset \P^4,\qquad
X_4\subset \P^4,
\]
\[
X_4\subset \P(1,1,1,1,2),\qquad
X_6\subset \P(1,1,1,1,3),\qquad
X_6\subset \P(1,1,1,2,3).
\]
Therefore the only possible ambient weight systems are
\[
(1,1,1,1,1),\ (1,1,1,1,2),\ (1,1,1,1,3),\ (1,1,1,2,3).
\]
The corresponding degrees are read off directly from the same list:
\(\P^3\), \(Q^3\), \(X_3\subset \P^4\), and \(X_4\subset \P^4\) give
\(d=1,2,3,4\) for the weight system \((1,1,1,1,1)\), while the weighted cases are
\[
X_4\subset \P(1,1,1,1,2),\qquad
X_6\subset \P(1,1,1,1,3),\qquad
X_6\subset \P(1,1,1,2,3).
\]
This proves the claim.
\end{proof}

\begin{corollary}\label{cor:C2-smooth-fano-3fold}
Let \(k\) be a perfect field of characteristic \(p>3\), and let $X$ be a smooth Fano threefold hypersurface in a weighted projective space over \(k\).
Then every $W(k)$-lift of $X$ is globally $+$-regular.
\end{corollary}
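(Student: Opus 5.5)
The plan is to reduce to Corollary~\ref{Fano-smooth}. First I invoke Proposition~\ref{prop:smooth-fano-3fold-weights}, which lists the possible weights and degrees. Then for each case I check the two hypotheses $p\nmid d$ and
\[
(Q-d)p^2+dp+Q-Nd>0,
\]
with $N=4$ and $p>3$, i.e.\ $p\ge 5$. Since $k$ is perfect, $W(k)$ is a complete unramified discrete valuation ring with maximal ideal $(p)$. Hence every $W(k)$-lift is an unramified lift in the sense of Corollary~\ref{Fano-smooth}.

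For the divisibility condition, the degrees occurring are $d\in\{1,2,3,4,6\}$. Every prime divisor of these is $2$ or $3$, so $p\ge 5$ gives $p\nmid d$.

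For the numerical condition, note $\dim X=3$, so $p\ge 5\ge \dim X$. The ``in particular'' clause of Corollary~\ref{Fano-smooth} therefore applies directly. The underlying inequality is that $p\ge N-1$ implies $dp-Nd\ge -d$, so the left-hand side is at least $(Q-d)(p^2+1)>0$ because $Q>d$ in the Fano case. One can also verify this case by case:
\begin{itemize}
\item $(1,1,1,1,1)$ has $Q=5$ and $d\le 4$;
\item $(1,1,1,1,2)$ has $Q=6$ and $d=4$;
\item $(1,1,1,1,3)$ has $Q=7$ and $d=6$;
\item $(1,1,1,2,3)$ has $Q=8$ and $d=6$.
\end{itemize}
In every case $Q>d$, as the Fano condition requires.

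The remaining point is that Corollary~\ref{Fano-smooth}, through Proposition~\ref{prop:hypersurface-lift}, requires $X$ to be a smooth weighted hypersurface, meaning $X$ is smooth and its equation $f$ has $\bar\m$-primary Jacobian ideal. That is, Theorem~\ref{thm:Fano} needs quasi-smoothness plus $p\nmid d$. Quasi-smoothness follows from smoothness of $X$ together with the well-formedness of the listed weight systems. The Euler relation $\sum q_ix_if_i=d f$ and $p\nmid d$ then show that $(f,J(f))$ primary implies $J(f)$ primary.

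One step needs care. The case $(1,1,1,1,1)$ with $d=1$ is $\P^3$, a hyperplane. Here the lift statement is trivial, or it is covered directly, since the lift is $\P^3_{W(k)}$, which is globally $+$-regular. I expect no real obstacle beyond making sure the smoothness hypotheses of Corollary~\ref{Fano-smooth} match the classification's output.
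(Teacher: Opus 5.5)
Your proposal is correct and matches the paper's argument: you classify via Proposition~\ref{prop:smooth-fano-3fold-weights}, observe that $p\ge 5$ forces $p\nmid d$ for $d\in\{1,2,3,4,6\}$, and check the numerical condition using $p\ge \dim X$ and $Q>d$. The one difference is that you go through Corollary~\ref{Fano-smooth}, whereas the paper cites Theorem~\ref{thm:Fano} directly; your route is the more careful one, since Proposition~\ref{prop:hypersurface-lift} is what ensures that an arbitrary $W(k)$-lift is itself a weighted hypersurface (your unproved remark that smoothness plus well-formedness gives quasi-smoothness is something the paper likewise takes for granted).
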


\begin{proof}
We assume $X$ is a hypersurface in
\[
\P(q_0,\ldots,q_4)
\]
with defining equation of degree $d$.
We set $Q:=q_0+\cdots+q_4$. Then, by Proposition~\ref{prop:smooth-fano-3fold-weights}, the possible pairs $(Q,d)$ are
\[
(Q,d)=(5,1),\ldots,(5,4),(6,4),(8,6),(7,6).
\]

Since here \(N=4\) and $p\nmid d$, the condition in Theorem~\ref{thm:Fano} is satisfied.
Thus, the result follows from Theorem~\ref{thm:Fano}. 
\end{proof}

\begin{theorem}\label{thm:general-Fano}
Let $C$ be a complete discrete valuation ring with maximal ideal $(p)$ and $k:=C/(p)$.
Let
\[
\wt{X}=(f=0) \subseteq \mathbb{P}_C(q_0,\ldots,q_N)
\]
be a Fano hypersurface of degree $d$ in a well-formed weighted projective space.
We assume $X:=\wt{X} \times_{\Spec C} \Spec k$ is quasi-smooth in $\P_k(q_0,\ldots,q_N)$.
If the splitting-order sequence $(s_i)_{i \geq 0}$ of $f$ satisfies $s_n \leq p-1$ for every $n \geq 1$, then $\wt{X}$ is globally $+$-regular.
\end{theorem}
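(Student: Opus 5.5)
The plan is to follow the proof of Theorem~\ref{thm:Fano}, replacing Corollary~\ref{cor:eventual-vanishing-fano} with Theorem~\ref{general-fano}. Write $\wt{X}=\Proj A$ with $A:=C[x_0,\dots,x_N]/(f)$, graded by the weights $q_i$, and set $\m:=(p,x_0,\dots,x_N)$. The aim is to show that the localization $A_\m$ is perfectoid BCM-regular, hence $+$-regular, and then to pass to global $+$-regularity of $\wt{X}$ via \cite{Onuki}*{Lemma~2.2}, exactly as in the proof of Theorem~\ref{thm:Fano}.

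The first step is to verify the hypotheses of Theorem~\ref{general-fano}. The inequality $Q>d$ holds because $\wt{X}$ is Fano and the weighted projective space is well-formed, so that $\omega_X\cong\cO_X(d-Q)$. The bound $s_n\le p-1$ for all $n\ge1$ is assumed. The remaining hypothesis is that $(\bar f,J(\bar f))\subset \bar A$ is $\bar\m$-primary. This is the definition of quasi-smoothness of $X$: the affine cone $\Spec \bar A/(\bar f)$ is smooth away from the vertex, which by the Jacobian criterion means that the common zero locus of $\bar f,\bar f_0,\dots,\bar f_N$ is the vertex only. Unlike in Theorem~\ref{thm:Fano}, we do not need $p\nmid d$ here, since the Euler relation $d\bar f=\sum q_ix_i\bar f_i$ is only needed to drop $\bar f$ from the ideal, and Theorem~\ref{general-fano} keeps $\bar f$.

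Theorem~\ref{general-fano} then gives $s_n=0$ for $n\gg0$ and that $(A/f)_\m$ is perfectoid BCM-regular; implicitly this uses that $\bar A/(\bar f)$ has an isolated singularity, which again follows from quasi-smoothness. Perfectoid BCM-regularity implies $+$-regularity of $A_\m$, and \cite{Onuki}*{Lemma~2.2} converts this $+$-regularity of the section ring at the irrelevant maximal ideal into global $+$-regularity of $\wt{X}$.

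The main obstacle I expect is checking that \cite{Onuki}*{Lemma~2.2} applies in the weighted setting. That lemma requires $A$ to be a section ring of an ample $\mathbb Q$-divisor, namely $\cO_{\wt X}(1)$, on $\wt{X}=\Proj A$. This is where well-formedness enters: it ensures that $\cO(1)$ restricts correctly to $\wt X$ and that $A\cong\bigoplus_m H^0(\wt X,\cO_{\wt X}(m))$, at least up to the saturation issues already handled in Theorem~\ref{thm:Fano}. I would use the same justification as in Theorem~\ref{thm:Fano}.
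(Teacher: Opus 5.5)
Your proposal is correct and is essentially the paper's proof, which consists of invoking Theorem~\ref{general-fano} to get perfectoid BCM-regularity (hence $+$-regularity) of $A_\m$ and then applying \cite{Onuki}*{Lemma~2.2}. Your hypothesis checks only make explicit what the paper leaves implicit: Fano gives $Q>d$, and quasi-smoothness gives that $(\bar f,J(\bar f))$ is $\bar\m$-primary, so no condition $p\nmid d$ is needed, unlike in Theorem~\ref{thm:Fano}.
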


\begin{proof}
It follows from Theorem~\ref{general-fano} and \cite{Onuki}*{Lemma~2.2}.
\end{proof}

\bibliographystyle{skalpha}
\bibliography{bibliography.bib}
\end{document}